\newtheorem{prethm}{{\bf Theorem}}
\newenvironment{thm}{\begin{prethm}{\hspace{-0.5
				em}{\bf}}}{\end{prethm}}
\newtheorem{prepro}{{\bf Theorem}}
\newtheorem{preprop}{{\bf Proposition}}
\newtheorem{precor}{{\bf Corollary}}
\newenvironment{cor}{\begin{precor}{\hspace{-0.5
				em}{\bf}}}{\end{precor}}
\newtheorem{preconj}{{\bf Conjecture}}
\newtheorem{predefi}{{\bf Definition}}
\newenvironment{defi}{\begin{predefi}{\hspace{-0.5
				em}{\bf}}}{\end{predefi}}
\newtheorem{preremark}{{\bf Remark}}
\newtheorem{preexample}{{\bf Example}}
\newtheorem{prelem}{{\bf Lemma}}
\newenvironment{lem}{\begin{prelem}{\hspace{-0.5
				em}{\bf}}}{\end{prelem}}
\newtheorem{prelam}{{\bf Lemma}}
\newtheorem{preprob}{{\bf Problem}}
\newtheorem{preproof}{{\bf Proof}}
\newenvironment{proof}[1]{\begin{preproof}{\rm
			#1}\hfill{$\Box$}}{\end{preproof}}
\newtheorem{preali}{{\bf Proof of Theorem 1.}}
\newenvironment{ali}[1]{\begin{preali}{\rm
			#1}\hfill{$\Box$}}{\end{preali}}
\newtheorem{prealii}{{\bf Proof of Theorem 2.}}
\newenvironment{alii}[1]{\begin{prealii}{\rm
			#1}\hfill{$\Box$}}{\end{prealii}}
\newtheorem{prealiii}{{\bf Proof of Theorem 3.}}
\newenvironment{aliii}[1]{\begin{prealiii}{\rm
			#1}\hfill{$\Box$}}{\end{prealiii}}
\newtheorem{prealiiii}{{\bf Proof of Theorem 4.}}
\newenvironment{aliiii}[1]{\begin{prealiiii}{\rm
			#1}\hfill{$\Box$}}{\end{prealiiii}}
\newtheorem{prealij}{{\bf Proof of Theorem 5.}}
\newenvironment{alij}[1]{\begin{prealij}{\rm
			#1}\hfill{$\Box$}}{\end{prealij}}
\newtheorem{prealijj}{{\bf Proof of Theorem 6.}}
\newenvironment{alijj}[1]{\begin{prealijj}{\rm
			#1}\hfill{$\Box$}}{\end{prealijj}}
\newtheorem{prealijjj}{{\bf Proof of Theorem 7.}}
\newenvironment{alijjj}[1]{\begin{prealijjj}{\rm
			#1}\hfill{$\Box$}}{\end{prealijjj}}
\newtheorem{prealijjjk}{{\bf Proof of Theorem 8.}}
\author{{\normalsize
		{Arash Ahadi${}^{\mathsf{a}}$},\,
		{Mohsen Mollahajiaghaei${}^{\mathsf{b}}$},\,
		{Ali Dehghan${}^{\mathsf{c}}$},\,
	}\vspace{3mm}
	\\
	{\footnotesize{${}^{\mathsf{a}}$\it Department of Mathematical and Computer Sciences, Kharazmi University, Tehran, Iran}}
	{\footnotesize{}}\\
	{\footnotesize{${}^{\mathsf{b}}$\it Department of Mathematics, University of Western Ontario, London, Ontario, Canada}}
	{\footnotesize{}}\\
	{\footnotesize{${}^{\mathsf{c}}$\it
			Systems and Computer Engineering Department, Carleton University, Ottawa,   Canada}}
	{\footnotesize{}}}
\title{On the maximum number of non attacking rooks
	on a high-dimensional simplicial chessboard}
\begin{document}
\maketitle

\begin{abstract}
{
The simplicial rook graph ${\rm \mathcal{SR}}(m,n)$ is the graph whose vertices are vectors in $ \mathbb{N}^m$ such that for each vector the summation of its coordinates is $n$ and two vertices are adjacent if their corresponding vectors differ in exactly two coordinates. 
Martin and Wagner (Graphs Combin. (2015) 31:1589--1611) asked about the independence number of ${\rm \mathcal{SR}}(m,n)$ that is the  maximum number of  non attacking rooks which can be placed
on a $(m-1)$-dimensional simplicial chessboard of side length $n+1$. In this work, we solve this problem and show that $\alpha({\rm \mathcal{SR}}(m,n))=\big(1-o(1)\big)\frac{\binom{n+m-1}{n}}{m}$. We also prove that  for the domination number of rook graphs we have $\gamma({\rm \mathcal{SR}}(m, n))= \Theta (n^{m-2})$. Moreover we show that these graphs are Hamiltonian. 	
		
The cyclic simplicial rook graph  ${\rm \mathcal{CSR}}(m,n)$  is the graph whose vertices are vectors in $\mathbb{Z}^{m}_{n}$ such that for each vector the summation of its coordinates  modulo $n$ is $0$ and two vertices are adjacent if their corresponding vectors differ in exactly two coordinates. In this work we determine several properties of these graphs such as independence number, chromatic number and automorphism group. Among other results, we also prove that computing the distance between two vertices of a given  ${\rm \mathcal{CSR}}(m,n)$ is $ \mathbf{NP}$-hard in terms of $n$ and $m$. 
}\end{abstract}

\section{Introduction}
\label{sec1}

The simplicial rook graph ${\rm \mathcal{SR}}(m,n)$ is the graph whose vertices are vectors in $ \mathbb{N}^m$ such that for each vector the summation of its coordinates is $n$ and two vertices are adjacent if their corresponding vectors differ in exactly two coordinates.
The simplicial rook graph ${\rm \mathcal{SR}}(3,2)$ is shown in Fig. \ref{fig:sr}. The graph ${\rm \mathcal{SR}}(m,n)$ is  a $n(m-1)$-regular graph and it has $\binom{n+m-1}{n}$ vertices.
These graphs were introduced by Martin and Wagner in 2015 \cite{rook1}. They investigated several properties of  simplicial rook graphs and posed several conjectures and questions about these graphs \cite{rook1}. Martin and Wagner conjectured that ${\rm \mathcal{SR}}(m,n)$ is integral (i.e. all of its eigenvalues are integer numbers) \cite{rook1}. That conjecture was solved by Brouwer {\it et al.} \cite{Rook}.

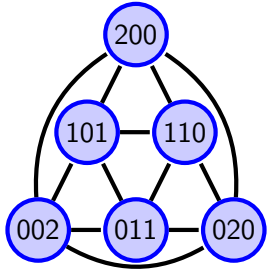
\begin{figure}
	\begin{center}
		\begin{tikzpicture}[baseline=(current bounding box.center),scale=1.3, shorten >=0.9pt, auto, node distance=2cm, ultra thick]
		\begin{scope}[every node/.style={circle,draw=blue,fill=blue!20!,inner sep = 2 pt, ,font=\sffamily}]  
		
		\node (v1) at (4,2) {002};
		\node (v2) at (5,2) {011};
		\node (v3) at (6,2) {020};
		\node (v4) at (4.5,3) {101};
		\node (v5) at (5.5,3) {110};
		\node (v6) at (5,4) {200};    
		
		\draw  (v1) edge  (v4);
		\draw  (v1) edge[bend left]  (v6);
		\draw  (v1) edge  (v2);
		\draw  (v1) edge[bend right]  (v3);
		
		\draw  (v2) edge  (v3);
		\draw  (v2) edge  (v4);
		\draw  (v2) edge  (v5);
		
		\draw  (v3) edge (v5);
		\draw  (v3) edge[bend right]  (v6);
		
		\draw  (v4) edge  (v5);
		\draw  (v4) edge  (v6);
		
		\draw  (v5) edge  (v6);
		\end{scope}
		\end{tikzpicture}
	\end{center}
	\caption{ The simplicial rook graph $ {\rm \mathcal{SR}} (3, 2)$.}\label{fig:sr}
\end{figure}

Martin and Wagner also asked about the independence number of ${\rm \mathcal{SR}}(m,n)$ that can be interpreted as   the  maximum
number of  non attacking rooks which can be placed
on a $(m-1)$-dimensional simplicial chessboard of side length $n+1$ \cite{rook1}. Finding the maximum
number of  non attacking rooks which can be placed
on a $2$-dimensional simplicial chessboard (i.e. calculating  the independence number of ${\rm \mathcal{SR}}(3,n)$) was independently investigated by several authors. Nivasch and Lev \cite{Nivasch}, Blackburn {\it et al.} \cite{Blackburn} and Vaderlind {\it et al.} \cite{vaderlind} independently proved that  the independence number of ${\rm \mathcal{SR}}(3,n)$ is equal to $1+ \lfloor   \frac{2}{3} n  \rfloor $.
Recently, Brouwer {\it et al.}  calculated  the independence number of ${\rm \mathcal{SR}}(m,3)$  \cite{Rook}. In this work
we present tight lower and upper bounds for the independence number of  ${\rm \mathcal{SR}}(m,n)$ for any $m$ and $n$. For more details about the connection between 
the independence number of ${\rm \mathcal{SR}}(m,n)$ and the  maximum
number of  non attacking rooks please see \cite{rook1}.

\begin{thm}\label{Th01}
Let $p$ be a prime number such that $p\geq  \max\{m,n\}$, then $\frac{\binom{n+m-1}{n}}{p} \leq \alpha({\rm \mathcal{SR}}(m,n)) \leq \frac{\binom{n+m-1}{n}}{m}$. Also, 
  $\alpha({\rm \mathcal{SR}}(m,n)) =\big(1-o(1)\big)\frac{\binom{n+m-1}{n}}{m}$.
\end{thm}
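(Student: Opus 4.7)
The plan has three pieces: a combinatorial upper bound via clique counting, an algebraic lower bound from a proper coloring, and an asymptotic combining the two through a prime-gap estimate.

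For the upper bound $\alpha(\mathcal{SR}(m,n)) \leq \binom{n+m-1}{n}/m$, I would argue by double counting on a distinguished family of $m$-cliques. For each integer $k$ with $1 \leq k \leq n$ and each $w \in \mathbb{N}^m$ with $\sum_i w_i = n-k$, set $C_{k,w} = \{w + k\,e_i : 1 \leq i \leq m\}$; since any two of its elements differ by $k(e_i - e_j)$, it is a clique of size $m$ in $\mathcal{SR}(m,n)$. A vertex $v$ lies in $C_{k,w}$ for each pair $(i,k)$ with $1 \leq k \leq v_i$, giving exactly $\sum_i v_i = n$ cliques of this family through each vertex, while the hockey-stick identity produces $\sum_{k=1}^n \binom{n-k+m-1}{m-1} = \binom{n+m-1}{m}$ cliques in total. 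For any independent set $I$, the double count $n|I| \leq \binom{n+m-1}{m}$ yields $|I| \leq \binom{n+m-1}{n}/m$ after simplifying $\binom{n+m-1}{m}/n = \binom{n+m-1}{n}/m$.

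For the lower bound $\alpha \geq \binom{n+m-1}{n}/p$ with $p$ prime and $p \geq \max(m,n)$ (strictly, one needs $p > n$ to rule out the extremal adjacency with $|k|=n$), I would define $f : V \to \mathbb{Z}/p\mathbb{Z}$ by $f(v) = \sum_{i=1}^m i\, v_i \pmod{p}$ and verify it is a proper coloring. For adjacent $u,v$ with $v-u = k(e_i - e_j)$ one has $1 \leq |k| \leq n < p$ and $1 \leq |i-j| \leq m-1 < p$, so $f(v) - f(u) \equiv k(i-j)$ is a product of two nonzero residues in the field $\mathbb{F}_p$, hence nonzero. Averaging over the $p$ color classes produces an independent set of size at least $\binom{n+m-1}{n}/p$.

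For the asymptotic statement, I would invoke a prime-gap estimate: by the prime number theorem, for every $\varepsilon > 0$ the interval $[x, x(1+\varepsilon)]$ contains a prime once $x$ is sufficiently large. Taking $x = \max(m,n)$ gives a prime $p = (1+o(1))\max(m,n)$ satisfying the hypothesis, and the two bounds above combine to give $\alpha / \bigl(\binom{n+m-1}{n}/m\bigr) \geq m/p$; as $\max(m,n) \to \infty$ in the regime $n \leq m(1+o(1))$ this ratio tends to $1$, matching the upper bound. The main obstacle is identifying the right clique family $C_{k,w}$ for the upper bound, after which the proof is a short double count; the lower-bound coloring is then essentially forced by the adjacency structure, and the asymptotic follows from Bertrand-type prime estimates.
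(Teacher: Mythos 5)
Your proof is correct, and your lower bound is essentially the paper's argument: the paper partitions $V$ into classes $\Upsilon_t=\{a:\sum_i i\,a_i\equiv t \pmod p\}$, which is exactly your coloring $f(v)=\sum_i i\,v_i \bmod p$, and applies pigeonhole. (Your parenthetical that one really needs $p>n$ is a sharp observation: for adjacent $a=(n,0,\dots)$ and $b=(0,n,0,\dots)$ the difference $k(i-j)$ equals $n$, so the argument breaks if $p=n$; the paper states $p\ge\max\{m,n\}$ in the theorem but silently uses $p>\max\{m,n\}$ in the proof.) Where you genuinely diverge is the upper bound. The paper invokes Hoffman's ratio bound $\alpha\le\frac{-\lambda_{\min}}{r-\lambda_{\min}}|V|$, relying on the nontrivial external fact $\lambda_{\min}=\max\{-n,-\binom{m}{2}\}$ due to Brouwer et al., and splits into the cases $n\le\binom{m}{2}$ and $n>\binom{m}{2}$. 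You instead give a purely combinatorial fractional clique cover: the $m$-cliques $C_{k,w}=\{w+ke_i\}$ cover each vertex exactly $n$ times, there are $\binom{n+m-1}{m}$ of them by the hockey-stick identity, and an independent set meets each in at most one vertex, so $n|I|\le\binom{n+m-1}{m}$, i.e.\ $|I|\le\binom{n+m-1}{n}/m$. This is self-contained, elementary, and avoids the spectral input entirely, at the cost of having to find the right clique family; the Hoffman route is shorter if one is willing to cite the eigenvalue computation. Your treatment of the asymptotics via prime gaps matches the paper's, and you are in fact more careful than the paper in noting that the ratio $m/p$ of the two bounds only tends to $1$ in the regime where $n$ does not dominate $m$ (the paper's closing sentence glosses over this).
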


Motivated by the problem of determining the independence number,  we also study its domination number.
Next, we focus on domination number of $SR(m,n)$ and find the asymptotic value of it.

\begin{thm}\label{Th02}
In terms of $n$, we have $\gamma({\rm \mathcal{SR}}(m,n))= \Theta (n^{m-2})$.	
\end{thm}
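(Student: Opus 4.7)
My plan is to prove the two asymptotic bounds separately, and the key point is that the upper bound has an almost trivial explicit construction while the lower bound is a counting argument.

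For the lower bound, I will just use the standard packing estimate $\gamma(G) \ge |V(G)|/(\Delta(G)+1)$. Since $\mathcal{SR}(m,n)$ has $\binom{n+m-1}{n}$ vertices and is $n(m-1)$-regular, this gives
\[
\gamma(\mathcal{SR}(m,n)) \;\ge\; \frac{\binom{n+m-1}{n}}{n(m-1)+1} \;=\; \Theta(n^{m-2})
\]
for fixed $m$, which handles one direction immediately.

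For the upper bound, I will exhibit a dominating set by slicing off one coordinate. Let
\[
D \;=\; \bigl\{\, v=(v_1,\dots,v_m)\in V(\mathcal{SR}(m,n))\ :\ v_1=0\,\bigr\}.
\]
Since $D$ is in bijection with the vectors in $\mathbb{N}^{m-1}$ summing to $n$, we have $|D|=\binom{n+m-2}{m-2}=\Theta(n^{m-2})$. To verify that $D$ dominates, pick any $v\in V$. If $v_1=0$, then $v\in D$; otherwise define
\[
u \;=\; (0,\,v_1+v_2,\,v_3,\dots,v_m).
\]
The coordinates of $u$ are nonnegative integers summing to $n$ with $u_1=0$, so $u\in D$. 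Because $v_1\ge 1$, the vectors $u$ and $v$ differ in coordinate $1$ (where $u_1=0\ne v_1$) and in coordinate $2$ (where $u_2=v_1+v_2\ne v_2$), and they agree in all remaining coordinates. Hence $u$ is adjacent to $v$, so $v$ is dominated by $D$.

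There is essentially no obstacle beyond being careful that the witness $u$ differs from $v$ in exactly two coordinates (not one), which is precisely what the condition $v_1\ge 1$ guarantees. Combining the two bounds yields $\gamma(\mathcal{SR}(m,n))=\Theta(n^{m-2})$, which is the claim.
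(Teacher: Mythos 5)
Your proof is correct, and it follows the same overall strategy as the paper: the lower bound via the packing estimate $\gamma(G)\ge |V(G)|/(\Delta(G)+1)$ is identical, and the upper bound is an explicit dominating set of the right order. The only genuine difference is the choice of dominating set. The paper takes $D=\{x : x_1=x_2\}$, whose size must be computed as the sum $\sum_{i=0}^{\lfloor n/2\rfloor}\binom{n+m-3-2i}{m-3}$ and then bounded by $\tfrac12\binom{n+m-1}{m-2}$ via an averaging trick and Pascal's rule. Your set $D=\{v : v_1=0\}$ has the advantage that its cardinality is immediately $\binom{n+m-2}{m-2}=\Theta(n^{m-2})$ with no summation needed, and your adjacency check (the witness $u=(0,v_1+v_2,v_3,\dots,v_m)$ differs from $v$ in exactly the first two coordinates precisely because $v_1\ge 1$) is just as clean as the paper's. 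The paper's set is asymptotically about half the size of yours, but since the theorem only claims $\Theta(n^{m-2})$, nothing is lost; your route is slightly shorter.
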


The graph properties such as diameter, chromatic number and Hamiltonicity are all related to the spectrum.  Vermette in has PhD thesis studied the spectra of the  simplicial  rook  graphs and how their spectra relate to their properties \cite{Jason}. Finally, we study the remaining properties of these graphs.
We show that the simplicial rook graph  $ {\rm \mathcal{SR}} (m ,n)$ is Hamiltonian for any values of $m$ and $n$, except  the  cases $m=1$ and $(m,n)=(2,1)$.

\begin{thm}\label{Th03}
The simplicial rook graph  $ {\rm \mathcal{SR}} (m ,n)$ is Hamiltonian for any values of $m$ and $n$, except  the  cases $m=1$ and $(m,n)=(2,1)$.	
\end{thm}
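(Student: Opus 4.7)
The plan is to proceed by induction on $m$, slicing $\mathcal{SR}(m,n)$ into layers based on the last coordinate and splicing Hamiltonian paths from successive layers into a single cycle. First I would dispose of the exceptional cases: $\mathcal{SR}(1,n)$ consists of a single vertex and $\mathcal{SR}(2,1)=K_2$, so neither admits a Hamiltonian cycle. When $m=2$ and $n\geq 2$, any two distinct vectors $(a,n-a)$ and $(b,n-b)$ differ in both coordinates, so $\mathcal{SR}(2,n)$ is the complete graph $K_{n+1}$ and is trivially Hamiltonian.

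For $m\geq 3$, let $L_k$ denote the set of vertices of $\mathcal{SR}(m,n)$ whose last coordinate equals $k$. The subgraph induced on $L_k$ is isomorphic to $\mathcal{SR}(m-1,n-k)$, and the edges between $L_k$ and $L_{k+1}$ are precisely those of the form $(a_1,\ldots,a_i,\ldots,a_{m-1},k)\sim(a_1,\ldots,a_i-1,\ldots,a_{m-1},k+1)$ for some coordinate $i$ with $a_i\geq 1$. The strategy is to obtain a Hamiltonian path inside each non-trivial layer with endpoints chosen so that consecutive layer-paths can be chained through a bridging edge, and so that the final bridge from the singleton $L_n=\{(0,\ldots,0,n)\}$ closes the walk into a cycle. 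Since $(0,\ldots,0,n)$ is adjacent in $\mathcal{SR}(m,n)$ to every vector of the form $(0,\ldots,0,s,0,\ldots,0,n-s)$ with $s\geq 1$, there is ample flexibility to close the cycle at a corner vertex (one with a single nonzero entry) of $L_0$.

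To make the induction run smoothly, I would strengthen the hypothesis: for $m\geq 3$ and $n\geq 1$, the graph $\mathcal{SR}(m,n)$ admits a Hamiltonian path between any two distinct corner vertices. This corner-to-corner traceability lets me enter each layer $L_k$ at a corner reached by a bridging edge from a corner of $L_{k-1}$ and exit at a different corner of $L_k$, ready to bridge into $L_{k+1}$. The base case $m=3$ can be handled directly: each $L_k$ is the clique $K_{n-k+1}$, and traversing it from $(0,n-k,k)$ to $(n-k,0,k)$ while alternating direction across layers lands at $(0,0,n)\in L_n$, which closes back to $(0,n,0)\in L_0$ since these two vertices differ in exactly two coordinates.

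The main obstacle is handling the small layers near the top of the tower, where the inductive hypothesis does not apply cleanly: $L_n$ is a single vertex, $L_{n-1}\cong K_{m-1}$ corresponds to $\mathcal{SR}(m-1,1)$, and when $m-1=2$ this reduces to the exceptional case $\mathcal{SR}(2,1)=K_2$. These small layers must be stitched in by hand using the explicit adjacency description above, and the splicing must be arranged so that the bridging edges from $L_{n-2}$ to $L_{n-1}$ and from $L_{n-1}$ to $L_n$ align with the chosen corner entry and exit points. Once these boundary adjustments are verified and the strengthened inductive hypothesis is established, concatenating the layer-paths produces the desired Hamiltonian cycle of $\mathcal{SR}(m,n)$.
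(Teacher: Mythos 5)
Your proposal is correct and follows essentially the same route as the paper: induction on $m$, slicing $\mathcal{SR}(m,n)$ by one coordinate into layers isomorphic to $\mathcal{SR}(m-1,\cdot)$, chaining Hamiltonian paths of the layers via bridging edges, and closing the cycle at the apex $(0,\ldots,0,n)$ --- the paper's strengthened invariant is a Hamiltonian cycle through the fixed edge $(n,0^{m-1})(n-1,1,0^{m-2})$ rather than your corner-to-corner traceability, but the mechanics are identical, including the hand treatment of the small top layers. The one point to make explicit in a full write-up is that your strengthened hypothesis for a pair of corners both lying in the bottom layer reduces, via the coordinate-permutation symmetry of $\mathcal{SR}(m,n)$ (equivalently, by slicing along the coordinate supporting one of the two target corners), to the case where one endpoint is the apex, which is the case your chain construction directly handles.
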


In the second part of the work, we introduce and investigate a cyclic version of  simplicial rook graphs. The cyclic simplicial rook graph  ${\rm \mathcal{CSR}} (m,n)$  is the graph whose vertices are vectors in $\mathbb{Z}^{m}_{n}$ such that for each vector the summation of its coordinates  modulo $n$ is $0$ and two vertices are adjacent if their corresponding vectors differ in exactly two coordinates. The cyclic simplicial rook graph ${\rm \mathcal{CSR}}(4,2)$ is shown in Fig. \ref{fig:csr}. The graph $ {\rm \mathcal{CSR}} (m,n)$ is a $\binom{m}{2}(n-1)$-regular graph with $n^{m-1}$ vertices. In this work we study several properties of these graphs such as their chromatic, independence  and clique numbers. 

\begin{thm}\label{Th04}
(i)	Let $p$ be a prime number such that $p\geq  \max\{m, n\}$. Then $m \leq \chi\big( {\rm \mathcal{CSR}}(m,n)\big) \leq p$.\\
(ii) Let $p$ be a prime number such that $p\geq  \max\{m,n\}$, then $\frac{\binom{n+m-1}{n}}{p} \leq \alpha({\rm \mathcal{CSR}}(m,n)) \leq \frac{\binom{n+m-1}{n}}{m}$.\\
(iii) If $m, n \neq 1$, then $\omega( {\rm \mathcal{CSR}} (m,n))= \max \{n,m\}$.
\end{thm}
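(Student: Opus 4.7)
I would handle the three parts in the order (iii), (i), (ii), since (iii) supplies the clique of size $m$ needed for the lower bound in (i), and (ii) follows mechanically from (i) together with vertex transitivity. For part (iii), the lower bound is proved by two explicit cliques: the ``line'' clique $\{(a, -a, 0, \ldots, 0) : a \in \mathbb{Z}_n\}$ of size $n$ (any two elements differ in exactly positions $1, 2$) and the ``star'' clique $\{0\} \cup \{e_i - e_m : 1 \le i \le m - 1\}$ of size $m$ (any two members differ in exactly two coordinates). For the upper bound I would fix a maximum clique $K$ with a reference vertex $u \in K$ and study the map $D : K \setminus \{u\} \to \binom{[m]}{2}$ defined by $D(v) = \{s : v_s \ne u_s\}$. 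Adjacency of $v, w \in K$ forces $D(v) \cap D(w) \ne \emptyset$, so $\{D(v)\}$ is a pairwise-intersecting family of $2$-sets. Such a family is either contained in a star (all $D(v)$ share a common coordinate $i^*$) or inside a single triangle $\{i, j, k\}$. In the star case a local computation shows that $v_{i^*}$ takes a constant value across $K \setminus \{u\}$, which pins each $v$ down by its second differing coordinate and gives $|K| \le m$; the degenerate sub-case where $D$ is constant gives $|K| \le n$. Ruling out the triangle configuration and treating a few low-dimensional corner cases are the local obstacles for this part.

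For part (i), the lower bound $\chi \ge \omega \ge m$ is immediate from part (iii). For the upper bound I would construct an explicit proper $p$-colouring of the form
\[
c(v) \equiv \sum_{i=1}^{m} a_i v_i \pmod{p},
\]
with each coordinate $v_i$ lifted canonically to $\{0, 1, \ldots, n - 1\}$ and $(a_1, \ldots, a_m)$ a suitably chosen tuple of distinct elements of $\mathbb{Z}_p$. For adjacent $v, w$ differing in positions $i < j$, the sum constraint pins the integer differences to $\delta_i + \delta_j \in \{-n, 0, n\}$. In the ``no wrap'' case $\delta_i + \delta_j = 0$ one has $c(v) - c(w) \equiv (a_i - a_j)\delta_i \pmod{p}$, nonzero because $|a_i - a_j|, |\delta_i| < p$. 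The main obstacle of the whole proof will be the wrap-around case $\delta_i + \delta_j = \pm n$, where the difference becomes $(a_i - a_j)\delta_i \pm n a_j$: the plan is to choose $(a_1, \ldots, a_m)$ so that none of the finitely many equations $(a_i - a_j)\delta \equiv \mp n a_j \pmod{p}$ has a solution $\delta \in \{1, \ldots, n - 1\}$, with existence of such a tuple guaranteed by a counting argument using the hypothesis $p \ge \max\{m, n\}$.

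Finally, for part (ii), the graph $\mathcal{CSR}(m,n)$ is a Cayley graph on the subgroup $H = \{v \in \mathbb{Z}_n^m : \sum v_s \equiv 0 \pmod{n}\}$ of $\mathbb{Z}_n^m$ and so is vertex transitive. The ratio bound $\alpha(G) \cdot \omega(G) \le |V(G)|$ for vertex-transitive $G$, combined with the clique of size $m$ from part (iii), gives the upper bound on $\alpha$; the matching lower bound follows from the universal inequality $\chi(G) \cdot \alpha(G) \ge |V(G)|$ and the bound $\chi \le p$ from part (i).
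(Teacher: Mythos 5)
Your overall architecture is sound and in several places cleaner than the paper's: the paper obtains $\chi\ge m$ by composing a homomorphism from ${\rm \mathcal{SR}}(m,n-1)$ with a Hoffman eigenvalue bound, and gets the upper bound on $\alpha$ from the ratio bound, whereas your explicit cliques together with the clique--coclique inequality for vertex-transitive graphs avoid all spectral input. For part (iii) the paper also works through maximum cliques (it fixes $\mathbf{0}$ in a maximum clique and decomposes the common neighbourhood of $\mathbf{0}$ and a neighbour $\alpha(e_a-e_b)$ into three sets $N_1,N_2,N_3$), so your star/triangle dichotomy is essentially the same idea. But be aware that the triangle configuration cannot be ``ruled out'': for even $n$ the set $\{u,\,u+\tfrac n2(e_i-e_j),\,u+\tfrac n2(e_i-e_k),\,u+\tfrac n2(e_j-e_k)\}$ is a genuine clique whose difference-pattern is exactly a triangle (this is the $\alpha=n/2$ perfect-matching case the paper treats separately). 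What must be shown is that any such clique has at most $4$ vertices, and $4\le\max\{m,n\}$ only when $\max\{m,n\}\ge 4$; indeed ${\rm \mathcal{CSR}}(3,2)=K_4$, so the small cases are not mere corner cases to be checked but a place where the claim itself requires care.

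The serious gap is the upper bound $\chi\le p$. You have correctly identified the wrap-around case $\delta_i+\delta_j=\pm n$ as the crux --- the paper's proof simply reuses the ${\rm \mathcal{SR}}$ partition with coefficients $a_i=i$ and never addresses it, and that argument breaks exactly there (e.g.\ $(3,1,0)$ and $(0,0,0)$ are adjacent in ${\rm \mathcal{CSR}}(3,4)$ and both receive colour $0$ modulo $5$). However, your proposed repair does not close. The congruences to be avoided are $a_i\delta+a_j(n-\delta)\equiv 0\pmod p$ for all ordered pairs $i\neq j$ and all $\delta\in\{1,\dots,n-1\}$, on the order of $m^2n$ conditions, each excluding one residue for one coefficient; a greedy or union-bound count therefore needs roughly $p>(m-1)(2n-1)$, far beyond the hypothesis $p\ge\max\{m,n\}$. (When $p=n$ the wrap term $na_j$ vanishes modulo $p$ and the naive colouring works, but for $p>n$ no admissible choice of coefficients is exhibited, and it is not clear one exists.) So the linear-form colouring, as planned, does not deliver $\chi\le p$. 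Finally, a small mismatch in (ii): your clique--coclique and $\chi\cdot\alpha\ge|V|$ inequalities yield $n^{m-1}/p\le\alpha\le n^{m-1}/\max\{m,n\}$, since $|V({\rm \mathcal{CSR}}(m,n))|=n^{m-1}$ rather than the $\binom{n+m-1}{n}$ appearing in the statement; that is arguably the correct form of the claim, but you should state the discrepancy explicitly.
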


In this work we also study the diameter of the  cyclic simplicial rook graphs.

\begin{thm}\label{Th05}
We have	$diam( {\rm \mathcal{CSR}} (m,n))=m-\lfloor \frac{m-1}{n} \rfloor -1$.
\end{thm}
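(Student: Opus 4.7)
The plan is to reduce the distance computation in ${\rm \mathcal{CSR}}(m,n)$ to a combinatorial partition problem on the difference vector. Each edge of the graph corresponds to an \emph{elementary vector} in $\mathbb{Z}_n^m$ supported on exactly two coordinates whose entries sum to $0 \pmod n$, so for any pair $u,v$ the distance $d(u,v)$ equals the least number of elementary vectors whose sum is $d := v - u$. Viewing each elementary vector as a weighted edge on the vertex set $\{1,\ldots,m\}$ and applying a standard flow / spanning-forest argument (each connected component of the resulting multigraph carries a subset of coordinates of $d$ summing to $0$), I will establish the identity $d(u,v) = s(d) - t(d)$, where $s(d)$ is the Hamming weight of $d$ and $t(d)$ is the maximum number of blocks in a partition of the multiset of nonzero entries of $d$ into sub-multisets each summing to $0 \pmod n$. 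The diameter then equals the maximum of $s(d) - t(d)$ over admissible $d$.

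For the lower bound I will take $u = 0$ and $v = (1,1,\ldots,1,a)$ with $a \equiv -(m-1) \pmod n$. When $n \nmid m-1$, the vector $v$ has weight $m$; I will show by a direct count that any optimal partition of the multiset $\{1^{m-1},a\}$ uses one block consisting of $a$ together with exactly $(m-1)\bmod n$ ones, and packs the remaining ones into blocks of size $n$, giving $t(v) = 1 + \lfloor (m-1)/n \rfloor$. In the boundary case $n \mid m-1$, $a$ vanishes and the same arithmetic gives $s(v) = m - 1$, $t(v) = (m-1)/n$. Either way $d(0,v) = m - 1 - \lfloor (m-1)/n \rfloor$.

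For the upper bound, the core step is the inequality $t(d) \geq \lceil s(d)/n \rceil$, which I will prove by strong induction on $s = s(d)$ using the Davenport constant $D(\mathbb{Z}_n) = n$. The base $s \leq n$ is trivial since the whole multiset is a single zero-sum block. For $s \geq n+1$, selecting any $n$ nonzero entries and applying Davenport produces a nonempty zero-sum sub-multiset $A$ of size at most $n$; since no nonzero element can be a one-element zero-sum block, both $A$ and its complement have at least two elements. The induction hypothesis applied to the complement then yields $t(d) \geq 1 + \lceil (s - |A|)/n \rceil \geq \lceil s/n \rceil$, using $|A| \leq n$. Finally, the function $s \mapsto s - \lceil s/n \rceil$ is non-decreasing and takes the value $m - \lceil m/n \rceil = m - 1 - \lfloor (m-1)/n \rfloor$ at $s = m$, so the upper bound on the diameter follows.

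The main obstacle is the borderline case $s = n+1$ in the induction, where the Davenport-extracted sub-multiset could a priori have size $n$ and leave only a singleton complement. This is precisely ruled out by the fact that a lone nonzero entry cannot itself be zero, forcing $|A| \leq n-1$ in that case. Apart from this subtlety, the argument is a clean blend of the flow interpretation of distances and elementary zero-sum combinatorics in $\mathbb{Z}_n$, and no stronger tool such as Erd\H{o}s--Ginzburg--Ziv is needed.
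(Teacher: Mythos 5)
Your proposal is correct and follows essentially the same route as the paper: your identity $d(u,v)=s(d)-t(d)$ is the paper's Lemma~1 in the equivalent form $d(\mathbf{0},\mathbf{b})=m-\tau(\mathbf{b})$ (singleton blocks absorb the zero coordinates), your extremal vertex $(1^{m-1},-(m-1))$ is the paper's $(n-(m-1),1^{m-1})$, and your Davenport-constant extraction $D(\mathbb{Z}_n)=n$ is proved by exactly the pigeonhole on partial sums that the paper uses in its induction for the upper bound. The only substantive difference is your connected-component (spanning-forest) count for the inequality $d(\mathbf{0},\mathbf{b})\ge m-\tau(\mathbf{b})$, which is cleaner than the paper's induction on $m$ for that step but proves the same lemma.
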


The distance between two vertices is equal to the number of edges of a shortest path between those vertices.
We show that computing the distance between the vertices is an NP-hard problem in term of $m$ and $n$.

\begin{thm}\label{Th06}
Computing the distance between two vertices of a given  ${\rm \mathcal{CSR}}(m,n)$ is $ \mathbf{NP}$-hard in terms of $n$ and $m$.
\end{thm}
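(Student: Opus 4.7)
The plan is to reduce from the strongly $\mathbf{NP}$-hard $3$-Partition problem: given positive integers $c_1,\ldots,c_{3s}$ with $B/4 < c_i < B/2$ and $\sum_i c_i = sB$, decide whether they can be grouped into $s$ triples each summing to $B$. Given such an instance, I would build $\mathcal{CSR}(m,n)$ with $m=4s$ and $n$ a prime exceeding $2sB$, and ask for the distance between $u=(0,\ldots,0)$ and $v=-d$, where
\[
d = (c_1,c_2,\ldots,c_{3s},\,-B,\,-B,\ldots,-B) \pmod n
\]
has $s$ trailing copies of $-B$. Because $\sum_i d_i \equiv 0 \pmod n$, both $u$ and $v$ lie in $\mathcal{CSR}(m,n)$, and the whole construction has size polynomial in the $3$-Partition input (which is why strong $\mathbf{NP}$-hardness is invoked).

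The heart of the argument is a combinatorial characterization of distance. A walk of length $k$ from $v$ to $u$ amounts to a decomposition $d = \sum_{t=1}^k a_t(e_{i_t}-e_{j_t})$ with each $a_t \not\equiv 0 \pmod n$, and the associated multigraph $H$ on $\{1,\ldots,m\}$ has the $k$ edges $\{i_t,j_t\}$. Since each edge contributes $+a_t$ and $-a_t$ to its endpoints, every connected component $C$ of $H$ satisfies $\sum_{i \in V(C)} d_i \equiv 0 \pmod n$. The choice $n > 2sB \geq \sum_i |d_i|$ upgrades this congruence to an equality in $\mathbb{Z}$, so the components of $H$ that meet the support $T = \{i : d_i \neq 0\}$ partition $T$ into blocks whose $d$-values truly sum to $0$ over $\mathbb{Z}$. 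Writing $b^{*}(d)$ for the maximum number of parts in any such zero-sum partition of $T$, summing $|E(C)| \geq |V(C)|-1$ over components yields $k \geq |T| - b^{*}(d)$, while the reverse inequality is realized by taking a spanning tree inside each block of an optimal partition and reading off the unique edge weights forced by flow conservation (nonzero because an optimal partition admits no refinement). Consequently $d(u,v) = |T| - b^{*}(d)$.

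Applied to the specific $d$ above, which has $|T| = 4s$, the constraint $B/4 < c_i < B/2$ forces every zero-sum block to contain at least four elements: a block of size $2$ would require $c_i+c_j=0$, $c_i=B$, or $-2B=0$, all impossible; a size-$3$ block containing exactly one $-B$ would need $c_i+c_j=B$, contradicting $c_i,c_j<B/2$, while the other sign patterns in size $3$ have strictly positive or strictly negative sum. Hence $b^{*}(d) \leq s$, with equality if and only if $T$ splits into $s$ blocks of size $4$, each of which is then forced to consist of one $-B$ together with three $c_i$'s summing to $B$, i.e.\ a valid triple of the $3$-Partition. Therefore $d(u,v) = 3s$ when the instance is a YES and $d(u,v) \geq 3s+1$ otherwise, so deciding whether $d(u,v) \leq 3s$ decides $3$-Partition.

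The main obstacle I anticipate is the lower bound $d(u,v) \geq |T| - b^{*}(d)$, because two subtleties have to be handled. First, vertices of $H$ lying outside $T$ (external vertices) might seem to permit shorter realizations, but they increase $|V(H)|$ and therefore demand extra edges through the component bound $|E(C)| \geq |V(C)|-1$, so they cannot save moves. Second, and more delicately, working modulo $n$ could in principle create spurious zero-sum components that are not zero-sum over $\mathbb{Z}$, enabling realizations of $d$ which do not correspond to any integer partition of $T$. Ruling this out is exactly what the bound $n > \sum_i |d_i|$ is chosen to accomplish, and it is the reason for inflating $n$ to $\Omega(sB)$ in the construction.
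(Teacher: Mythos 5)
Your reduction is correct and follows the same overall strategy as the paper: reduce from (strongly $\mathbf{NP}$-hard) $3$-Partition and translate the distance question into a ``maximum zero-sum partition'' question via the identity $d(\mathbf{0},\mathbf{b})=m-\tau(\mathbf{b})$, which the paper establishes separately as Lemma 1 and which you essentially re-derive (your $|T|-b^{*}(d)$ equals $m-\tau$ once the zero coordinates are counted as singleton parts). The difference is in the gadget. The paper sets $n$ equal to the target sum $s$ itself and uses the vertex $(a_1,\ldots,a_{3k})$ with $m=3k$: the constraint $s/4<a_i<s/2$ forces every part of a zero partitioning mod $s$ to have size at least $3$, and a size-$3$ part sums to a value in $(3s/4,3s/2)$, hence is $0$ mod $s$ exactly when it equals $s$; so the modulus does all the work and no auxiliary coordinates are needed. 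You instead append $s$ ``sink'' coordinates equal to $-B$ and inflate $n$ to a prime exceeding $\sum_i|d_i|$ so that all congruences become integer equalities. Both are valid; the paper's construction is more economical (smaller $m$ and $n$, no need to argue away spurious mod-$n$ cancellations), while yours is more self-contained in that it does not rely on the mod-$n$ version of the distance lemma and makes the ``no wraparound'' issue explicit. Your lower-bound argument via components of the move multigraph and $|E(C)|\geq|V(C)|-1$, and your upper bound via spanning trees of an unrefinable optimal partition, are both sound and match the content of the paper's Lemma 1.
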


In \cite{Rook} it is shown that the automorphism group of ${\rm \mathcal{SR}}(m,n)$ is $\mathcal{S}_m$ when $n>3$, where $\mathcal{S}_m$ stands for the permutation group on $m$ elements.  Also it is $\mathcal{S}_m \times \mathbb{Z}_2$ when $n=3$.
Finally, we study the automorphism group of ${\rm \mathcal{CSR}}(m,n)$.

\begin{thm}\label{Th07}
We have $\mathbf{Aut}( {\rm \mathcal{CSR}}(m,n))\cong \mathcal{S}_m \times  \
	\mathbb{Z}^{\times}_{n} \times \mathbb{Z}^{m-1}_{n}$ for $n, m>3$.
\end{thm}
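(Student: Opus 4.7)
The plan is in two steps. First, I exhibit a subgroup of $\mathbf{Aut}(\mathcal{CSR}(m,n))$ of cardinality $m!\,|\mathbb{Z}^{\times}_n|\,n^{m-1}$; second, I show this is the full automorphism group.

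For the first step, set $H:=\{w\in\mathbb{Z}_n^m:\sum_i w_i\equiv 0\pmod n\}$. I check directly that for every $\sigma\in\mathcal{S}_m$, $u\in\mathbb{Z}^{\times}_n$ and $t\in H$, the map $v\mapsto u\,\sigma(v)+t$ sends $H$ to $H$ and preserves the ``differ in exactly two coordinates'' relation. The translations by elements of $H$ form a regular subgroup isomorphic to $\mathbb{Z}_n^{m-1}$, normalised by the coordinate permutations and by scalar multiplication, which together produce a subgroup of the claimed cardinality.

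For the second step, observe that $\mathcal{CSR}(m,n)$ is the Cayley graph $\mathrm{Cay}(H,S)$ with connection set $S=\{v\in H:|\mathrm{supp}(v)|=2\}=\{a(e_i-e_j):a\neq 0,\ i\neq j\}$, so the translation subgroup acts regularly on vertices and it suffices to identify the stabiliser $G_{\mathbf{0}}$ of the zero vertex. Any $\phi\in G_{\mathbf{0}}$ permutes $S$, and hence permutes the $\binom{m}{2}$ \emph{lines} $L_{ij}=\{a(e_i-e_j):a\in\mathbb{Z}_n\}$ through $\mathbf{0}$: for $n\geq m$ these are precisely the maximum cliques through $\mathbf{0}$ by Theorem~4(iii), and for $m>n$ they can be recovered intrinsically as the maximal subsets $C\subseteq S\cup\{\mathbf{0}\}$ closed under pairwise differences inside $S\cup\{\mathbf{0}\}$. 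On the set of lines, declare $L\sim L'$ iff some $x\in L\setminus\{\mathbf{0}\}$ and $y\in L'\setminus\{\mathbf{0}\}$ satisfy $x-y\in S$; a short case analysis gives $L_{ij}\sim L_{k\ell}$ iff $\{i,j\}\cap\{k,\ell\}\neq\emptyset$, so this relation is precisely the Johnson graph $J(m,2)$. Since $\mathbf{Aut}(J(m,2))=\mathcal{S}_m$ for $m\geq 5$, I may compose $\phi$ with a coordinate permutation and thereafter assume $\phi$ fixes every $L_{ij}$ setwise. The triangle identity $a(e_i-e_j)+a(e_j-e_k)=a(e_i-e_k)$ then forces, through the unique matching-parameter triangle across the three lines $L_{ij},L_{jk},L_{ik}$, a common bijection $\tau:\mathbb{Z}_n\to\mathbb{Z}_n$ with $\tau(0)=0$ governing every line simultaneously.

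To force $\tau$ to be multiplication by a unit, I use that whenever $\{i,j\}\cap\{k,\ell\}=\emptyset$ and $a,b$ are generic, the vertex $a(e_i-e_j)+b(e_k-e_\ell)$ is the unique non-$\mathbf{0}$ common neighbour of $a(e_i-e_j)$ and $b(e_k-e_\ell)$, so $\phi$ must send it to $\tau(a)(e_i-e_j)+\tau(b)(e_k-e_\ell)$; comparing two such ``disjoint-sum'' representations of a common vertex, together with the $\tau$-action on each line, yields $\tau(a+b)=\tau(a)+\tau(b)$, so $\tau(a)=ua$ for some $u\in\mathbb{Z}^{\times}_n$ and $\phi$ coincides with $v\mapsto u\,v$. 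This gives $|G_{\mathbf{0}}|=m!\,|\mathbb{Z}^{\times}_n|$, completing the cardinality match. The main obstacle is the boundary case $m=4$, where $J(4,2)$ is the octahedron admitting the extra order-$2$ ``complement-of-pairs'' involution $\{i,j\}\mapsto\{1,2,3,4\}\setminus\{i,j\}$ not induced by any element of $\mathcal{S}_4$; ruling out its extension to $\mathcal{CSR}(4,n)$ requires a dedicated argument, for example comparing the number of common $S$-neighbours of a pair in ``crossing'' lines such as $L_{12},L_{13}$ with the corresponding count for ``opposite'' lines $L_{12},L_{34}$ and showing the two counts disagree. Additional care is also needed in the cross-line linearisation step when $n$ has small-order elements, since parameter cancellations can introduce spurious common neighbours into the case analysis.
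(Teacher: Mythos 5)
Your route is genuinely different from the paper's: you reduce to the stabiliser of $\mathbf{0}$ via the Cayley structure and analyse the incidence pattern of the lines $L_{ij}$ through $\mathbf{0}$ (getting the Johnson graph $J(m,2)$), whereas the paper works with the global partitions $\mathcal{P}_{ab}$ of the whole vertex set and an edge-counting argument between parts (Lemmas 3--5), never invoking $J(m,2)$. The plan is viable, but as written it has a genuine gap already at the first step: the claim that an element of $G_{\mathbf 0}$ permutes the lines is not justified when $n=m$. By Corollary 1, at $n=m$ the maximum cliques through $\mathbf 0$ comprise \emph{both} the lines and the core-type sets $-\alpha e_b+\alpha\{e_t: t\in[m]\}$, which have the same size $n=m$, so the lines are neither ``precisely the maximum cliques through $\mathbf 0$'' nor distinguishable by cardinality; the paper spends the $n=m$ case of Lemma 3 (Property I) exactly on this point. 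Your alternative characterisation for $m>n$ has the same defect --- ``maximal subsets of $S\cup\{\mathbf 0\}$ closed under pairwise differences landing in $S\cup\{\mathbf 0\}$'' is just ``maximal cliques through $\mathbf 0$'', and the core-type sets are also maximal cliques --- though for $m\neq n$ this is repairable because the two types have different sizes $n$ and $m$.

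Two further steps do not go through as described. For $m=4$ you correctly flag the exceptional involution $\{i,j\}\mapsto[4]\setminus\{i,j\}$ of $J(4,2)$, but the test you propose cannot rule it out: that involution sends crossing pairs of lines to crossing pairs and opposite pairs to opposite pairs, so any invariant separating only ``crossing'' from ``opposite'' is blind to it. What it fails to preserve is the distinction between a concurrent triple such as $L_{14},L_{24},L_{34}$ and a triangular triple such as $L_{12},L_{13},L_{23}$ (it swaps these two), and these are separable inside the graph: the concurrent triple carries the $n-1$ vertex-triangles $\{a(e_1-e_4),a(e_2-e_4),a(e_3-e_4)\}$, $a\neq 0$, while a triangle with one nonzero vertex on each of $L_{12},L_{13},L_{23}$ forces $a=b$, $c=b$ and $c=-a$, hence $2a=0$, giving at most one such triangle --- that is the dedicated argument your sketch is missing. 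Finally, the linearisation of $\tau$ is not obtained the way you describe: a vertex $a(e_i-e_j)+b(e_k-e_\ell)$ with disjoint supports has a \emph{unique} disjoint-sum representation (its support determines both summands), so there are no ``two representations of a common vertex'' to compare, and the induced structure on a single pair of disjoint lines is only a rook's-graph product of two copies of $\mathbb{Z}_n$ as a set, which does not see the additive structure. The additivity must come from support-three vertices, e.g.\ $W=(a+b)e_1-ae_2-be_3$, whose only neighbours on $L_{12}$ are $a(e_1-e_2)$ and $(a+b)(e_1-e_2)$; chasing such configurations (this is what the paper's four vertices $A,B,C,D$ in Lemma 8 accomplish) yields $\tau(x+1)-\tau(x)=\tau(x)-\tau(x-1)$ and hence $\tau(x)=cx$. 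Until the $n=m$ identification of lines, the $m=4$ exclusion, and the additivity mechanism are supplied, the proposal is not a complete proof.
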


\begin{figure}
	\begin{center}
		\begin{tikzpicture}[baseline=(current bounding box.center),scale=1.2, 
		shorten >=0.7pt, auto, node distance=2cm, ultra thick]
		\begin{scope}[every node/.style={circle,draw=blue,fill=blue!20!,inner sep = 2 pt, ,font=\sffamily}]  
		
		\node (v1) at (4,2) {0000};
		\node (v2) at (5,2) {1111};
		\node (v3) at (6,1) {1100};
		\node (v4) at (6,0) {0011};
		\node (v5) at (5,-1) {1010};
		\node (v6) at (4,-1) {0101};    
		\node (v7) at (3,0) {1001};    
		\node (v8) at (3,1) {0110};    
		
		\draw  (v1) edge  (v3);
		\draw  (v1) edge  (v4);
		\draw  (v1) edge  (v5);
		\draw  (v1) edge  (v6);
		\draw  (v1) edge  (v7);
		\draw  (v1) edge  (v8);
		
		\draw  (v2) edge  (v3);
		\draw  (v2) edge  (v4);
		\draw  (v2) edge  (v5);
		\draw  (v2) edge  (v6);
		\draw  (v2) edge  (v7);
		\draw  (v2) edge  (v8);
		
		\draw  (v3) edge  (v5);
		\draw  (v3) edge  (v6);
		\draw  (v3) edge  (v7);
		\draw  (v3) edge  (v8);
		
		\draw  (v4) edge  (v5);
		\draw  (v4) edge  (v6);
		\draw  (v4) edge  (v7);
		\draw  (v4) edge  (v8);
		
		\draw  (v5) edge  (v7);
		\draw  (v5) edge  (v8);
		
		\draw  (v6) edge  (v7);
		\draw  (v6) edge  (v8);
		
		\end{scope}
		\end{tikzpicture}
	\end{center}
	\caption{The cyclic simplicial rook graph ${\rm \mathcal{CSR}} (4, 2)$. }\label{fig:csr}
\end{figure}
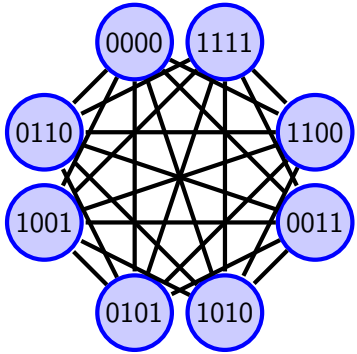

 Terminology and notation generally follow \cite{MR1367739}. 
Throughout the work, we denote by $[n]$ the set of integers $\{1, \dots , n\}$, also the graphs have no parallel edges and no loops. Furthermore, for a given graph $G$, we denote its vertex set by $V(G)$ and its edge set by  $E(G)$. Note that for an element $x$ and a set $A$, we have
$x+A=\{   x+a | a \in A  \}$.

\section{The simplicial rook graph ${\rm \mathcal{SR}}(m,n)$}\label{sec3}

\subsection{Independence number of  ${\rm \mathcal{SR}}(m,n)$}

Here, we prove that if $p$ is a prime number such that $p\geq  \max\{m,n\}$, then $\frac{\binom{n+m-1}{n}}{p} \leq \alpha({\rm \mathcal{SR}}(m,n)) \leq \frac{\binom{n+m-1}{n}}{m}$. Also, 
$\alpha({\rm \mathcal{SR}}(m,n)) =\big(1-o(1)\big)\frac{\binom{n+m-1}{n}}{m}$.

\begin{ali}{
First we prove the lower bound. For each nonnegative integer $k$, define $\mathcal{P}_{k}=\{ (a_{1},\ldots,a_{m})\in  {\rm \mathcal{ SR}} (m,n)\mid \sum_{i=1}^{m}ia_{i}=k \}$. The sets $\{\mathcal{P}_{k}\, |\, k \in \mathbb{N}\}-\{\emptyset\}$ is a partitioning for the vertices of ${\rm \mathcal{SR}} (m,n)$.
Let $p\geq \max\{m,n\}$ be a prime number. 
Define $\Upsilon_{t}=\bigcup_{k\equiv t~mod~p}\mathcal{P}_{k}$.
Then $\bigcup_{t\in [p]}\Upsilon_{t}$ is a partitioning for the vertices of ${\rm \mathcal{SR}}(m,n)$.

Next, we show that for each $t$, $\Upsilon_{t}$ is an independent set.
To the contrary suppose that $t\in [p]$ and $a,b\in \Upsilon_{t}$ are adjacent vertices in ${\rm \mathcal{SR}}(m,n)$. 
Let $a=(a_{1},\ldots,a_{m})$ and $b=(b_{1},\ldots,b_{m})$. 
Thus,  there are integers $i$ and $j$ such that $i\neq j$, and 
\begin{align} 
&a_{i}\neq b_{i},~ ~a_{j}\neq b_{j},& \text{Since $a$ and $b$ are adjacent,} \\
&a_{i}+a_{j}=b_{i}+b_{j}, &\text{Since $a ,b \in V({\rm \mathcal{SR}}(m,n))$,} \\
&ia_{i}+ja_{j}\equiv ib_{i}+jb_{j}~ (mod~p),  &\text{Since $a ,b \in \Upsilon_{t}$.}
\end{align}
By using (2) and (3), we have $(i-j)(a_{i}-b_{i})\equiv 0~ (mod~p)$. By (1), and noting that $p>\max \{m, n\}$ and it is a prime number, we  lead to a contradiction.  Thus,  for each $t$, $\Upsilon_{t}$ is an independent set. By the pigeonhole principle there is an integer $i$ such that the independent set $\Upsilon_{i}$ has at least $\frac{ V({\rm \mathcal{SR}}(m,n))}{p}=\frac{\binom{n+m-1}{n}}{p}$ vertices. This proves the lower bound.

Next, we focus on upper bound. We use the Hoffman's bound for independence number which states that if $G$ is   an $r$-regular graph on $V(G)$ vertices whose adjacency matrix $A$ has least eigenvalue $\lambda_{min}$, then $\alpha(G) \leq \frac{-\lambda_{min}}{r-\lambda_{min}} |V(G)|$ \cite{hof}.  
For the graph ${\rm \mathcal{SR}}(m,n)$ we know that $\lambda_{\min}=max \{ -n, -\binom{m}{2} \}$ \cite{Rook}. Also, note that  the graph ${\rm \mathcal{SR}}(m,n)$ is  a $n(m-1)$-regular graph and it has $\binom{n+m-1}{n}$ vertices \cite{rook1}. Thus, by the Hoffman's bound we have two cases:   If  $n \leq \binom{m}{2}$, then
\begin{equation}\label{E001}
\alpha \leq \frac{-\lambda_{min}}{r-\lambda_{min}}\binom{n+m-1}{n} = \frac{n}{n(m-1)+n}\binom{n+m-1}{n}= \frac{\binom{n+m-1}{n}}{m}.
\end{equation}
Also, if   $n> \binom{m}{2}$, then
\begin{equation}\label{E002}
\alpha \leq \frac{-\lambda_{min}}{r-\lambda_{min}}\binom{n+m-1}{n} = \frac{\binom{m}{2} \binom{n+m-1}{n}}{n(m-1)+\binom{m}{2}}  \leq  \frac{\binom{m}{2} \binom{n+m-1}{n}}{\binom{m}{2}(m-1)+\binom{m}{2}} = \frac{\binom{n+m-1}{n}}{m}.
\end{equation}
This completes the proof of upper bound.

Finally, we show that $\alpha({\rm \mathcal{SR}}(m,n)) =\big(1-o(1)\big)\frac{\binom{n+m-1}{n}}{m}$. 
It follows from the prime number theorem that for any real $\epsilon>0$ there is a $M> 0 $ such that for all $M' > M$ there is a prime $p$ such that $M' < p < (1 + \epsilon) M'$ (see \cite{PNT},  page 494).
Having the lower and upper bounds and using the prime number theorem  lead to $\alpha({\rm \mathcal{SR}}(m,n)) =\big(1-o(1)\big)\frac{\binom{n+m-1}{n}}{m}$. 		
}\end{ali}

\subsection{Domination number of  ${\rm \mathcal{SR}}(m,n)$}

Here, we prove that in terms of $n$ ($m$ is a fixed number), we have $\gamma({\rm \mathcal{SR}}(m,n))= \Theta (n^{m-2})$.

\begin{alii}{
It is well-known that for every graph $G$ we have $\gamma(G) \geq \frac{|V(G)|}{\Delta(G)+1}$ \cite{MR1367739}. Thus, 
\begin{equation}\label{E000}
 \gamma({\rm \mathcal{SR}}(m, n)) \geq \frac{\binom{n+m-1}{m-1}}{n(m-1)+1}= \Omega(n^{m-2}).
\end{equation} 

Next, consider the  set $D= \{(x_1, ..., x_m) \,|\, x_1=x_2\}$). We show that $D$ is a dominating set for
${\rm \mathcal{SR}}(m, n)$. Let $a=(a_{1},\ldots,a_{m})$ be an arbitrary vertex of ${\rm \mathcal{SR}}(m, n)$. Three cases can be considered: 
(i) If $a_2 > a_1$, then the vertex $a$ is adjacent with $(a_{1},a_{1},a_{3}-a_{1}+a_{2},a_{4},\ldots,a_{m})\in D$. (ii) Also, if $a_1>a_2$ then the vertex $a$ is adjacent with  $(a_{2},a_{2},a_{3}-a_{2}+a_{1},a_{4},\ldots,a_{m})\in D$. (iii) If $a_1=a_2$, then $a\in D$.
Therefore, $D$ is a dominating set. Next, we calculate the number of vertices in $D$. 
We note that $|D|$ is equal to $\sum_{i=0}^{\lfloor\frac{n}{2}\rfloor}|V(\mathcal{SR}(m-2,n-2i))|$. Consequently, 
\begin{equation}\label{E003}
|D| = \sum_{i=0}^ {\lfloor n/2 \rfloor } \binom{n+m-3-2i}{m-3}. 
\end{equation}
Note that for every $i\in \{0,1,\ldots,\lfloor n/2 \rfloor \}$, we have 
\begin{equation}\label{E004}
\binom{n+m-3-2i}{m-3} \leq \Big( \binom{n+m-3-2i}{m-3}+ \binom{n+m-3-2i+1}{m-3} \Big)/2
\end{equation}
By substituting (\ref{E004}) in (\ref{E003}), we have 
\begin{equation}\label{E005}
|D| \leq    \frac{1}{2} \sum_{i=-1}^ {2 \lfloor   n/2 \rfloor} 
\binom{n+m-3-i}{m-3} 
\end{equation}
By use Pascal's rule which states $\binom{x-1 }{y } + \binom{x-1 }{y-1 }=\binom{x }{y } $ and (\ref{E005}), we have
\begin{equation}\label{E006}
|D| \leq   \frac{1}{2} \binom{n+m-1}{m-2}= \mathcal{O}(n^{m-2}).
\end{equation}
By combining (\ref{E000}) and (\ref{E006}), we have $\gamma({\rm \mathcal{SR}}(m,n))= \Theta (n^{m-2})$.
}
\end{alii}

We conjecture that $\{(i,i,n-2i)\mid 0 \leq i \leq \lfloor n/2 \rfloor  \}$ is a minimum dominating set for ${\rm \mathcal{SR}}(3, n)$.

\subsection{Hamiltonian graphs} 

Here, we show that the simplicial rook graph  $ {\rm \mathcal{SR}} (m ,n)$ is Hamiltonian for any values of $m$ and $n$, except  the  cases $m=1$ and $(m,n)=(2,1)$.	

\begin{aliii}{
We prove an statement that is stronger than the theorem's statement. We show that every simplicial rook graph  $ {\rm \mathcal{SR}} (m ,n)$,  except  the  cases $m=1$ and $(m,n)=(2,1)$, has a Hamiltonian cycle $C$ such that it contains the edge  $e_n^m= (n, 0^{m-1}) (n-1, 1, 0^{m-2})$. Let $m\geq 3$ be an arbitrary number, then the graph  $ {\rm \mathcal{SR}} (m ,1)$ is a complete graph $K_{m}$, so it has a Hamiltonian cycle that go through the edge  $(1, 0^{m-1}) (0, 1, 0^{m-2})$.  We prove the remaining cases by induction on $m$. 
\\
\underline{Base case (i.e. $m=2$)}:  Consider the  simplicial rook graph   $ {\rm \mathcal{SR}} (2 ,n)$, where $n\geq 2$. The graph  $ {\rm \mathcal{SR}} (2 ,n)$ is  a complete graph $K_{n+1}$, thus, it has a Hamiltonian cycle that go through the edge  $(n, 0^{m-1}) (n-1, 1, 0^{m-2})$. \\
\underline{Inductive step (i.e. $m\geq 3$)}: Consider the graph  $ {\rm \mathcal{SR}} (m ,n)$, where $m\geq3$ and $n\geq 2$. We show that it has a Hamiltonian cycle $C$ such that it contains the edge  $e_n^m= (n, 0^{m-1}) (n-1, 1, 0^{m-2})$. By the induction hypothesis, for each $k \in [n]$, the graph  $\mathcal{SR}(m -1, k)$ 
has a Hamiltonian cycle $\mathcal{C}_k$ that contains the edge $e_k^{m-1}= (k, 0^{m-2}) (k-1, 1, 0^{m-3})$. Let $H_k^{m-1}$ be the set of edges $E(\mathcal{C}_k)\setminus \{e_k^{m-1}\}$. The following edges form a Hamiltonian cycle $C$ for $\mathcal{SR}(m, n)$.
\\ \\
$\bigcup^n _{k=1}  \big\{(n-k , u) (n-k, v) | uv \in H_k^{m-1} \big\}$,
\\
$\bigcup^{n} _{k=2} \big\{(n-k, k-1, 1, 0^{m-3}) (n-k+1, k-1, 0^{m-2}) \big\}$,
\\ 
$\bigcup \big\{(n-1, 0, 1, 0^{m-3}) (n, 0^{m-1}) \big\}$
$\bigcup \big\{(n, 0^{m-1}) (0, n, 0^{m-2}) \big\}$,
\\
where $(n-k , u)$ is a vector such that its first coordinate is $n-k$ and its other coordinates are the coordinates of the vector $u$.

Note that $\bigcup^n _{k=1}  \big\{(n-k , u) (n-k, v) | uv \in H_k^{m-1} \big\}$ are $n$ paths $\mathcal{P}_1,\mathcal{P}_2,\ldots,\mathcal{P}_n $ forming a partition of the vertices of $\mathcal{SR}(m,n)$. On the other hand, $\bigcup^{n} _{k=2} \big\{(n-k, k-1, 1, 0^{m-3}) (n-k+1, k-1, 0^{m-2}) \big\}$ are edges to join the last vertex of the path $\mathcal{P}_i$ to the path  $\mathcal{P}_{i+1}$.

Without loss of generality assume that $C=v_1,v_2,\ldots,v_z$.
Let $f:V(\mathcal{SR}(m, n)) \rightarrow V(\mathcal{SR}(m, n)) $ be a function that assigns a given vertex $v_i=(a_1,a_2,a_3,a_4\ldots,a_m)$ to the vertex $v_i'=(a_1,a_3,a_2,a_4\ldots,a_m)$. In fact it switches the second and third coordinates. Noting that $C$ is a Hamiltonian cycle containing the edge  $(n, 0^{m-1}) (n-1, 0, 1, 0^{m-3})$, thus, $C'=f(v_1),f(v_2),\ldots,f(v_z)$ is a Hamiltonian cycle containing the edge  $(n, 0^{m-1}) (n-1, 1, 0, 0^{m-3})$. This completes the proof.
}
\end{aliii}

\section{The cyclic simplicial rook graph  ${\rm \mathcal{CSR}} (m,n)$}\label{sec4}

\subsection{Chromatic and clique numbers of ${\rm \mathcal{CSR}}(m,n)$}

Here, we prove the following:\\
(i)	Let $p$ be a prime number such that $p\geq  \max\{m, n\}$. Then $m \leq \chi\big( {\rm \mathcal{CSR}}(m,n)\big) \leq p$.\\
(ii) Let $p$ be a prime number such that $p\geq  \max\{m,n\}$, then $\frac{\binom{n+m-1}{n}}{p} \leq \alpha({\rm \mathcal{CSR}}(m,n)) \leq \frac{\binom{n+m-1}{n}}{m}$.\\
(iii) If $m, n \neq 1$, then $\omega( {\rm \mathcal{CSR}} (m,n))= \max \{n,m\}$.

\begin{aliiii}
{
(i) We first prove the lower bound. Let $f:V({\rm \mathcal{SR}}(m,n-1))\longrightarrow V({\rm \mathcal{CSR}}(m,n))$ be a function such that $f(x_{1},\ldots,x_{m})=(x_{1},\ldots,x_{m}+1)$. The function $f$ is a graph homomorphism, thus, 
\begin{equation}\label{E007}
\chi({\rm \mathcal{SR}}(m,n-1))\leq \chi ({\rm \mathcal{CSR}}(m,n)).
\end{equation}
We use the Hoffman's bound  for chromatic number which states that if $G$ is   an $r$-regular graph on $V(G)$ vertices whose adjacency matrix $A$ has least eigenvalue $\lambda_{min}$, then $\chi(G) \geq \frac{r-\lambda_{min}}{-\lambda_{min}}  $ \cite{hof}.  
For the graph ${\rm \mathcal{SR}}(m,n)$ we know that $\lambda_{\min}=max \{ -n, -\binom{m}{2} \}$ \cite{Rook}. Also, note that  the graph ${\rm \mathcal{SR}}(m,n)$ is  a $n(m-1)$-regular graph \cite{rook1}. We have two cases:   If  $n \leq \binom{m}{2}$, then by  the Hoffman's bound  and (\ref{E007}) we have 
\begin{equation}\label{E008}
\chi ({\rm \mathcal{CSR}}(m,n)) \geq  \chi({\rm \mathcal{SR}}(m,n-1)) \geq \frac{r-\lambda_{min}}{-\lambda_{min}} \geq  \frac{n(m-1)+n}{n}=m.
\end{equation}
Also if $n> \binom{m}{2}$, we have
\begin{equation}\label{E009}
\chi ({\rm \mathcal{CSR}}(m,n)) \geq  \chi({\rm \mathcal{SR}}(m,n-1)) \geq \frac{r-\lambda_{min}}{-\lambda_{min}} \geq \frac{n(m-1)+\binom{m}{2}}{\binom{m}{2}} > \frac{\binom{m}{2}(m-1)+\binom{m}{2}}{\binom{m}{2}}  =m.
\end{equation}
For the upper bound, we use the proof of Theorem \ref{Th01}.
In the proof of that theorem we showed that  $\bigcup_{t\in [p]}\Upsilon_{t}$ is a partitioning for the vertices of ${\rm \mathcal{SR}}(m,n)$, where $p\geq  \{m, n\}$ and  for each $t$, the set $\Upsilon_{t}$ is an independent set. The sets  $\bigcup_{t\in [p]}\Upsilon_{t}$  is also a partitioning for the vertices of ${\rm \mathcal{CSR}}(m,n)$. For each set $\Upsilon_{t}$ by coloring its vertices by the color $t$, we obtain a proper coloring with $p$ colors. This proves the upper bound.\\
(ii) The proof of this part is completely similar to the proof of 	Theorem \ref{Th01}.\\
(iii) The graph ${\rm \mathcal{CSR}}(m,n)$ is vertex-transitive\footnote{A graph $G$ is vertex-transitive if for any two vertices $v_1$ and $v_2$ of $G$ there is some automorphism $f:V(G)\rightarrow V(G)$ such that $f(v_1)=v_2$}. So we can assume that  $W$ is a maximum clique and $\mathbf{0}=0^m\in W$. 
Each neighbor of the vertex $\mathbf{0}$ has the form $\alpha (e_{a}-e_{b})$ for some $\alpha \neq 0$ and $a \neq b$, where $e_{i}$ is a vector whose coordinates are all zero, except the $i$th coordinate that is equal to one. Suppose that $\mathbf{s}=\alpha(e_{a}-e_{b})\in W$.
The common neighbors of the vertex $\mathbf{0}$ and the vertex  $\mathbf{s}$ are the union of sets 
$N_1:=\{\beta(e_{a}-e_{b})\mid \beta\in \mathbb{Z}_{n}-\{0,\alpha\}\}$, 
$N_2:=\{\alpha(e_{t}-e_{b})\mid t\in
[m]-\{a,b\} \}$ and $N_{3}:=\{ \alpha(e_{a}-e_{r})\mid r\in [m]-\{ a,b \} \}$. 
We have $|N_1|=n-2$ and $|N_{2}|=|N_{3}|=m-2$. Note that $N_{1}\cap N_{2}=N_{1}\cap N_{3}=N_{2}\cap N_{3}=\emptyset$. Also, there is no edge between $N_{1}$ and $N_{2}$ and no edge  between $N_{1}$ and $N_{3}$ and if  $\alpha\neq \frac{n}{2}$, then there is no
edge between $N_{2}$ and $N_{3}$. If $\alpha= \frac{n}{2}$ (i.e., $\alpha=-\alpha$) 
then for every $t \in [m]-\{a, b\}$, 
the vertex $\alpha(e_t+e_b)$ of $N_2$ has a unique neighbor in $N_3$ 
(that is the vertex $\alpha(e_a+e_t)$) and vise versa. 
Thus, the induced subgraph between $N_{2}$ and $N_{3}$ is a perfect matching. 
Consequently, $W \cap (N_{1}\cup N_{2}\cup N_{3})=  \max
\{n-2,m-2\}$. Thus, $\omega({\rm \mathcal{CSR}}(m,n))= \max
\{n,m\}$, which proves the theorem.
}
\end{aliiii}

By the proof of part (iii) of Theorem \ref{Th04}, we have the following corollary.

\begin{cor}\label{cliqueunique}
Every maximum clique $W$ has the following structure:
\begin{itemize}
	\item If $n>m$, then $W$ is a coset of $S_{ab}$, where $S_{ab}=\{ \beta(e_{a}-e_{b})\mid \beta\in \mathbb{Z}_{n}\}$;
	in other words, there are $x\in \mathbb{Z}_{n}^{m}$ and $a \neq b$ such that $W=x+S_{ab}$.
	\item If $n<m$, then there are $\beta\in \mathbb{Z}_{n}$  and $\xi \in \mathbb{Z}_{n}^{m}$ 
	such that $W=\xi + \beta\{e_a | a \in [m]\}$.
	\item If $n=m$, then any of above structures can be happened.
\end{itemize}
\end{cor}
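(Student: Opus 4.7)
The plan is to read the corollary off from the detailed case analysis already carried out in the proof of Theorem~\ref{Th04}(iii). By vertex-transitivity I may translate $W$ so that $\mathbf{0} \in W$, and then fix any second vertex $\mathbf{s} = \alpha(e_a - e_b) \in W$. Every remaining element of $W$ is a common neighbor of $\mathbf{0}$ and $\mathbf{s}$, and therefore lies in the union $N_1 \cup N_2 \cup N_3$ introduced in that proof.

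First I would verify that each of $N_1, N_2, N_3$ is itself a clique: any two elements of $N_1$ differ only in the two coordinates $a$ and $b$, and similarly any two elements of $N_2$ (respectively $N_3$) differ only in the two indexing coordinates. Combined with the edge information already established there---no edges between $N_1$ and $N_2 \cup N_3$, and between $N_2$ and $N_3$ either no edges (when $\alpha \neq n/2$) or a perfect matching (when $\alpha = n/2$)---this forces any sub-clique of $N_1 \cup N_2 \cup N_3$ of size at least three to lie entirely inside one of $N_1$, $N_2$, or $N_3$.

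Consequently a maximum clique through $\mathbf{0}$ and $\mathbf{s}$ must take one of two canonical shapes. Taking $N_1$ gives $W = \{\mathbf{0},\mathbf{s}\} \cup N_1 = \{\beta(e_a - e_b) \mid \beta \in \mathbb{Z}_n\} = S_{ab}$, a cyclic subgroup of order $n$. Taking $N_2$ gives $W = \alpha\{e_t - e_b \mid t \in [m]\} = -\alpha e_b + \alpha\{e_t \mid t \in [m]\}$ of size $m$, which is the asserted form $\xi + \beta\{e_a \mid a \in [m]\}$ with $\xi = -\alpha e_b$ and $\beta = \alpha$; the choice $N_3$ produces the same form after rewriting with $\xi = \alpha e_a$ and $\beta = -\alpha$. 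Comparing these sizes with $\omega = \max\{n,m\}$ separates the cases in the statement: when $n > m$ only the $S_{ab}$-shape attains the maximum, when $n < m$ only the $\xi + \beta\{e_a\}$-shape does, and when $n = m$ both are realised. To undo the normalisation $\mathbf{0} \in W$ I translate back by an arbitrary element of $\mathbb{Z}_n^m$; since translation is a graph automorphism it preserves both canonical forms, producing precisely the coset descriptions in the statement.

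The main technical obstacle is the matching regime $\alpha = n/2$, where cross-edges between $N_2$ and $N_3$ could in principle admit additional cliques. The key point is that each vertex of $N_3$ is joined to a unique vertex of $N_2$, so any clique meeting both sides contains at most one vertex from each and hence has total size at most four; for $\max\{n,m\} \geq 5$ this is strictly dominated by the canonical shapes above, and the remaining small cases can be verified directly against the stated classification.
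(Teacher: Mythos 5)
Your argument tracks the paper's own justification exactly: the paper derives the corollary by pointing back to the case analysis in the proof of Theorem~\ref{Th04}(iii), and your reconstruction of that analysis (the sets $N_1,N_2,N_3$, the edge pattern between them, and the identification of the two canonical shapes) is correct and in fact more careful than the paper about the delicate regime $\alpha=n/2$.

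However, the last step --- ``the remaining small cases can be verified directly against the stated classification'' --- cannot be carried out, because the classification is false precisely there. When $n$ is even and $\alpha=n/2$, the set
\[
W^{\ast}=\Bigl\{\mathbf{0},\ \tfrac{n}{2}(e_1-e_2),\ \tfrac{n}{2}(e_3-e_2),\ \tfrac{n}{2}(e_1-e_3)\Bigr\}
\]
is a clique of size $4$ (its last two elements are exactly a matched pair of $N_2$ and $N_3$), and for $(m,n)=(4,4)$ --- which even satisfies the hypothesis $n,m>3$ under which the corollary is later invoked --- it is a \emph{maximum} clique, since $\omega=\max\{n,m\}=4$. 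In coordinates it is $\{(0,0,0,0),(2,2,0,0),(0,2,2,0),(2,0,2,0)\}$: the fourth coordinate is constant while the first three all vary, so $W^{\ast}$ is not a coset of any $S_{ab}$, and it is not of the form $\xi+\beta\{e_a\mid a\in[m]\}$ either, since in that form every coordinate varies across the set. The same construction defeats the statement for $(m,n)=(3,4)$ and $(4,2)$, and for $(3,2)$ even Theorem~\ref{Th04}(iii) fails, as $\mathcal{CSR}(3,2)\cong K_4$. So your proof is complete and correct whenever $n$ is odd or $\max\{n,m\}\ge 5$, but the deferred small cases are not a routine check: they are genuine counterexamples to the corollary as stated (and hence also to the paper's implicit proof), and the statement would have to exclude them.
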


\subsection{Diameter of ${\rm \mathcal{CSR}}(m,n)$}

In \cite{Rook} it is shown that $diam\big(\mathcal{SR}(m,n)\big)= \min \{m-1, n\}$.
Here we compute the diameter of ${\rm \mathcal{CSR}}(m,n)$. Before we present the proof, we need to define some notations.

\begin{defi}
Let $\mathbf{b}=(b_{1},\ldots,b_{m})$ be a vertex of ${\rm \mathcal{CSR}}(m,n)$. A zero partitioning of size $t$  for the vertex $\mathbf{b}$ is a partitioning of the set $[m]$ into $t$ parts $P_{1},\ldots,P_{t}$ such that $\sum_{j\in P_{i}}b_{j}=0$ modulo $n$ for every $i \in [t]$. The zero partitioning number of a vertex $\mathbf{b}$, denoted by $\tau(\mathbf{b})$, is the maximum $t$ that there exists a zero partitioning of size $t$ for the vertex $\mathbf{b}$.
\end{defi}

For example, $\{ \{1,2,6\},\{4\},\{3,5\}
\}$ is a zero partitioning of size three for the vertex $\mathbf{b}=(2,2,1,0,2,2) \in \mathbb{Z}^6_{3}$ and it is easy to check that  $\tau(\mathbf{b})=3$.

The distance between two vertices $v$ and $u$, denoted by $d(v,u)$ is equal to the number of edges of a shortest path between those vertices.

\begin{lem}\label{distance}
Let $\mathbf{0}=0^m$ and $\mathbf{b}$ be two vertices  of ${\rm \mathcal{CSR}}(m,n)$. Then $d(\mathbf{0},\mathbf{b})=m-\tau(\mathbf{b})$.
\end{lem}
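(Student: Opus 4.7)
The plan is to establish the two matching inequalities $d(\mathbf{0},\mathbf{b})\le m-\tau(\mathbf{b})$ and $d(\mathbf{0},\mathbf{b})\ge m-\tau(\mathbf{b})$ separately.

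For the upper bound, I would fix a zero partitioning $\{P_{1},\ldots,P_{t}\}$ of $\mathbf{b}$ of maximum size $t=\tau(\mathbf{b})$ and construct an explicit walk from $\mathbf{0}$ to $\mathbf{b}$ of length $\sum_{i=1}^{t}(|P_{i}|-1)=m-t$ by processing each block independently. Inside a block $P_{i}=\{j_{1},\ldots,j_{k}\}$, I would enumerate the indices in any order and apply a cascade of $k-1$ rook moves: the $\ell$-th move simultaneously sets coordinate $j_{\ell}$ to its target value $b_{j_{\ell}}$ and shifts the accumulated residual $\sum_{r\le\ell}b_{j_{r}}$ onto coordinate $j_{\ell+1}$, so that after step $\ell$ the positions $j_{1},\ldots,j_{\ell}$ already carry their correct target values. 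Because a rook move preserves the total coordinate sum, every intermediate vector lies in $V({\rm \mathcal{CSR}}(m,n))$. The one subtlety is that every such move must affect exactly two coordinates, i.e., each partial sum $\sum_{r\le\ell}b_{j_{r}}$ must be nonzero modulo $n$ for $1\le\ell\le k-1$; if one of them vanished, then $\{j_{1},\ldots,j_{\ell}\}$ and $\{j_{\ell+1},\ldots,j_{k}\}$ would both be zero-sum, so replacing $P_{i}$ by these two parts would give a zero partitioning of size $t+1$, contradicting the maximality of $\tau(\mathbf{b})$.

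For the lower bound, I would take any shortest $\mathbf{0}$-to-$\mathbf{b}$ path $\mathbf{0}=\mathbf{v}^{(0)},\mathbf{v}^{(1)},\ldots,\mathbf{v}^{(d)}=\mathbf{b}$ of length $d=d(\mathbf{0},\mathbf{b})$ and associate to it a multigraph $H$ on vertex set $[m]$: for each step $\ell$ I add the edge $\{i_{\ell},j_{\ell}\}$ recording the two coordinates that are altered at that step. Then $H$ has exactly $d$ edges on $m$ vertices, so it has at least $m-d$ connected components $C_{1},\ldots,C_{c}$. The central observation is that $\{C_{1},\ldots,C_{c}\}$ is itself a zero partitioning of $\mathbf{b}$: for each component $C$, the partial sum $\sum_{j\in C}v^{(\ell)}_{j}$ equals $0$ at $\ell=0$, and each rook move in the path modifies two coordinates that by construction lie in a common component of $H$, adjusting them by values that cancel modulo $n$; hence the sum over $C$ is invariant at every step. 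Consequently $\sum_{j\in C}b_{j}\equiv 0\pmod{n}$, forcing $\tau(\mathbf{b})\ge c\ge m-d$, i.e., $d\ge m-\tau(\mathbf{b})$.

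The main obstacle is the verification in the upper bound that no cascade move degenerates into an identity step; this rests on the structural fact that every block of a maximum zero partitioning is \emph{indecomposable}, in the sense that no proper nonempty subset of it is zero-sum, and must be flagged carefully in the write-up (singleton blocks simply have $b_{j}=0$ and contribute no moves). The lower bound, by contrast, is a clean translation between path length and the component structure of the edge multigraph induced by the path, and should go through with little fuss.
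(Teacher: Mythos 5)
Your proof is correct. The upper bound is the same construction the paper uses: sweep through each block of a maximum zero partitioning with a cascade of rook moves, spending $|P_i|-1$ steps per block; your extra observation that maximality of the partitioning forces every proper partial sum within a block to be nonzero (so no cascade step degenerates) is a worthwhile precision that the paper glosses over with an ``at most.'' Your lower bound, however, takes a genuinely different route. The paper argues by induction on $m$: assuming a too-short path exists, it counts coordinate incidences of the moves, finds a coordinate touched exactly once, deletes it, and derives a contradiction in ${\rm \mathcal{CSR}}(m-1,n)$ --- an argument whose key step (the existence of a suitable coordinate $r$ with $\alpha_r=1$) is delicate and only sketchily justified there. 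You instead build the multigraph $H$ on $[m]$ whose edges record the coordinate pairs altered along a shortest path, note that $d$ edges leave at least $m-d$ connected components, and observe that the coordinate sum over each component is an invariant of the walk (each move either touches two coordinates of the component, whose changes cancel modulo $n$, or none); hence the components themselves form a zero partitioning, giving $\tau(\mathbf{b})\ge m-d$ in one stroke. This is cleaner, avoids induction entirely, and makes transparent why $\tau$ is exactly the right combinatorial quantity: zero partitionings are precisely the component structures realizable by walks from $\mathbf{0}$ to $\mathbf{b}$.
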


\begin{proof} 
{Let $\mathbf{b}=(b_1,b_2,\ldots,b_m)$ and $\mathbf{0}=0^m$. First we show that $d(\mathbf{0},\mathbf{b})\leq m-\tau(\mathbf{b})$.  Let $P_{1},\ldots,P_{\tau(\mathbf{b})}$ be a zero partitioning of size $\tau(\mathbf{b})$  for the vertex $\mathbf{b}$. For each $t\in [\tau(\mathbf{b})]$, let $v_t$ be the vertex in  ${\rm \mathcal{CSR}}(m,n)$ such that all coordinates of $v_t$ that are in $\cup_{i=t+1}^{\tau(\mathbf{b})} P_i$ are zero and all of its other coordinates are equal to the corresponding coordinates in $\mathbf{b}$. Also, put $v_0=\mathbf{0}=0^m$.
For each $t$, there is a path of length at most $|P_{t}|-1$ between the vertices $v_{t-1}$ and $v_t$. So, there is a path of length at most $\sum_{i=1}^{\tau(\mathbf{b})}|P_{i}|-1=m-\tau(\mathbf{b})$ between $\mathbf{b}$ and $\mathbf{0}$.

Next, by induction on $m$, we prove that $d(\mathbf{0},\mathbf{b})\geq  m-\tau(\mathbf{b})$. The claim is clearly true for $m=1,2$. By induction hypothesis for every $m'<m$ and $\mathbf{b'}=(b_1',b_2',\ldots,b_{m'}')\in {\rm \mathcal{CSR}}(m',n)$, we have $d(\mathbf{0},\mathbf{b'})\geq  m'-\tau(\mathbf{b'})$. Now, we show that $d(\mathbf{0},\mathbf{b})\geq  m-\tau(\mathbf{b})$. To the contrary, suppose that
$d(\mathbf{0},\mathbf{b})< m-\tau(\mathbf{b})$. This   means that we have $m-\tau(\mathbf{b})-1$ vectors
like $x_{j}=a_{j}(e_{i_{j}}-e_{k_{j}})$
such that $\sum_{j=1}^{m-\tau(\mathbf{b})-1}x_{j}=\mathbf{b}$. Note that for each  $x_{j}=a_{j}(e_{i_{j}}-e_{k_{j}})$, we have $a_{j}\in \mathbb{Z}_{n}$, and also $e_{i_{j}}$ and $e_{k_{j}}$ are   vectors such that their coordinates are all zero, except the $i_{j}$th
coordinate (respectively,  $k_{j}$th coordinate) that is equal to one.
Let $\alpha_q=|\big\{j\mid e_q\in \{e_{i_{j}},e_{k_{j}}\}\big\}|$. So
$\sum_{q=1}^{m}\alpha_q=2(m-\tau(\mathbf{b})-1)$.
It means that there exists $r>1$ such that $\alpha_{r}=1$ 
(if $b_{i}\neq 0$ then $\alpha_{i}> 0$). 
Let $\{k\} =\big\{j\mid e_{r}\in\{e_{i_{j}},e_{k_{j}}\}\big\}$. 
Since the order of $x_{i}$ does not
matter, we can assume that $k=m-\tau(\mathbf{b})-1$. 
So $a_{m-\tau(\mathbf{b})-1}=b_{r}$.
Let  $\mathbf{c}= \sum _{i=1}^{m-\tau(\mathbf{b})-2} x_i$. 
The $r$-th coordinate of $c$ and $x_{i}$ for
$i=1,\ldots,m-\tau(\mathbf{b})-2$ are zero.  
Remove $r$-th coordinate in $x_{i}$ 
and $\mathbf{c}$ 
and call them $x'_{i}$ and $c'$, respectively. Note that $\mathbf{c'}$ is a vertex in ${\rm \mathcal{CSR}}(m-1,n)$. By the structure of $\mathbf{c'}$ we have
\begin{equation}\label{E010}
\tau(\mathbf{b})  \geq \tau(c') .
\end{equation}  
Also, by the induction hypothesis, we have
\begin{equation}\label{E011}
d_{\mathcal{CSR}(m-1,n)}(\mathbf{0},c')=m-1-\tau(c').
\end{equation}
By substituting (\ref{E010}) in (\ref{E011}), we have
\begin{equation}\label{E013}
d_{{\rm \mathcal{CSR}}(m-1,n)}(\mathbf{0},c')\geq m-1-\tau(\mathbf{b}).
\end{equation}	
On the other hand, the sequence $\mathbf{0},x'_{1},x'_{1}+x'_{2},\ldots,x'_{1}+\cdots
+x'_{m-\tau(\mathbf{b})-2}=c'$ 
is a walk of length $m-\tau(\mathbf{b})-2 $ from the vertex $\mathbf{0}$ to the vertex $c'$ in ${\rm \mathcal{CSR}}(m-1,n)$. 
Thus,
\begin{equation}\label{E012}
d_{{\rm \mathcal{CSR}}(m-1,n)}(\mathbf{0},c')\leq m-\tau(\mathbf{b})-2.
\end{equation}
The equations (\ref{E013}) and (\ref{E012}) together form a contradiction. This completes the proof of induction and thus completes the proof of lemma.
}\end{proof}

Before we prove the main result we need to present one more lemma.
Let $\Gamma$ be a group and $S$ be a subset of group elements  such that $S$ is closed under inverses and also $e \notin S$, where $e$ is the identity element  of $\Gamma$.
Then, the Cayley graph $Cay(\Gamma, S)$ is a graph where its vertices 
are the members of $\Gamma$ and two members $x$ and $y$ are adjacent if and only if $x^{-1}y \in S$.
Now consider the subgroup $\Gamma=\{(a_{1},\ldots,a_{m})\in \mathbb{Z}_{n}^{m} \mid \sum_{i=1}^{m}a_{i}=0\}$ of $\mathbb{Z}_{n}^{m}$
and let $S$ be the subset of $\Gamma$ 
such that for each member of $S$, all coordinates except two of them are zero. 
Then,  $\mathcal{CSR}(m,n)\cong Cay(\Gamma,S) $. 

 \begin{lem}\label{L001}
The cyclic simplicial rook graph  ${\rm \mathcal{CSR}} (m,n)$  is a Cayley graph.
 \end{lem}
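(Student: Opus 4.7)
My plan is simply to verify rigorously the isomorphism $\mathcal{CSR}(m,n)\cong\mathrm{Cay}(\Gamma,S)$ that is sketched in the paragraph immediately preceding the lemma; once the three Cayley-graph axioms are checked, no further work is needed.

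First, I would confirm that $\Gamma=\{(a_{1},\ldots,a_{m})\in\mathbb{Z}_{n}^{m}\mid \sum_{i=1}^{m}a_{i}=0\}$ really is a subgroup of $\mathbb{Z}_{n}^{m}$. This is immediate: the map $\sigma\colon\mathbb{Z}_{n}^{m}\to\mathbb{Z}_{n}$ sending $(a_{1},\ldots,a_{m})$ to $\sum_{i}a_{i}\pmod n$ is a group homomorphism, and $\Gamma=\ker\sigma$. In particular $V(\mathcal{CSR}(m,n))$ is, by definition, equal to $\Gamma$.

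Second, I would make $S$ fully precise: $S$ is the set of elements of $\Gamma$ whose support has size exactly two. I then verify the two conditions needed for $\mathrm{Cay}(\Gamma,S)$ to be a (simple, loopless) graph. For (a) $e=\mathbf{0}\notin S$, note that $\mathbf{0}$ has support of size $0$. For (b) $S^{-1}=S$, observe that negation $s\mapsto -s$ preserves both the support of a vector and the zero-sum condition, so it restricts to an involution of $S$.

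Third, I would verify that the two adjacency relations coincide on the common vertex set $\Gamma$. For distinct $x,y\in\Gamma$, being adjacent in $\mathcal{CSR}(m,n)$ means that they differ in exactly two coordinates, which translates exactly to saying that the difference $y-x\in\Gamma$ has support of size two, i.e.\ $y-x\in S$; and this is the defining condition for $x$ and $y$ to be adjacent in $\mathrm{Cay}(\Gamma,S)$. I should note the one easy-to-overlook point which is actually the whole reason the proof works cleanly: no element of $\Gamma$ has support of size exactly one, because a single nonzero coordinate cannot satisfy $\sum_{i}a_{i}\equiv 0\pmod n$; thus the Cayley graph does not accidentally include edges between vertices that differ in only one coordinate. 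There is no real obstacle here — every step is a one-line check — so the ``hard part'' is only the bookkeeping of making the support-size argument airtight.
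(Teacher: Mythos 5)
Your proposal is correct and follows exactly the construction the paper sketches in the paragraph before the lemma: the same subgroup $\Gamma$ of zero-sum vectors, the same connection set $S$ of support-two elements, and a routine verification that the adjacency relations agree. The only added value is your explicit check that $\Gamma$ contains no element of support one (so the Cayley graph has no spurious edges), a detail the paper leaves implicit but which is a one-line observation.
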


Now we are ready to calculate the diameter of ${\rm \mathcal{CSR}}(m,n)$.

\begin{alij}{
Consider the cyclic simplicial rook graph  ${\rm \mathcal{CSR}} (m,n)$. By Lemma 
\ref{L001},  ${\rm \mathcal{CSR}}(m,n)$ is a Cayley graph, so we have	
\begin{equation}\label{discay}
diam({\rm \mathcal{CSR}}(m,n))=\max\{d(\mathbf{a},\mathbf{b})\mid \mathbf{a},\mathbf{b}\in
{\rm \mathcal{CSR}}(m,n)\}=\max\{d(\mathbf{0},\mathbf{b})\mid \mathbf{b}\in
{\rm \mathcal{CSR}}(m,n)\}.
\end{equation}
By using Lemma \ref{distance} in (\ref{discay}), we have
\begin{equation}\label{E014}
 diam({\rm \mathcal{CSR}}(m,n))=\max\{m-\tau(\mathbf{b})\mid \mathbf{b}
 \in {\rm \mathcal{CSR}}(m,n)\}.
\end{equation}
By (\ref{E014}) to prove the theorem it is enough to show that 
\begin{equation}\label{E015}
\min\{\tau(\mathbf{b})\mid \mathbf{b} \in
{\rm \mathcal{CSR}}(m,n)\}=1+ \lfloor  \frac{m-1}{n} \rfloor.
\end{equation}
Let
$\mathbf{b}=(n-(m-1),1^{m-1})$. Then, we have
$\tau(\mathbf{b})=1+\lfloor  \frac{m-1}{n} \rfloor$. So,
\begin{equation}\label{E016}
\min\{\tau(\mathbf{b})\mid \mathbf{b}
\in {\rm \mathcal{CSR}}(m,n)\}\leq 1+ \lfloor  \frac{m-1}{n}  \rfloor.
\end{equation}
Next, we show that for each vertex $\mathbf{b}$ in ${\rm \mathcal{CSR}}(m,n)$, we have
\begin{equation}\label{E017}
\tau(\mathbf{b} )\geq  1+ \lfloor  \frac{m-1}{n} \rfloor
\end{equation}
For  given $m$ and $n$, in order to prove (\ref{E017}), we consider two cases:\\
If $m\leq n$, then $\min\{\tau(\mathbf{b})\mid \mathbf{b}\in {\rm \mathcal{CSR}}(m,n)\}\leq  1+\lfloor  \frac{m-1}{n}  \rfloor=1$. So, $\min\{\tau(\mathbf{b})\mid \mathbf{b}\in {\rm \mathcal{CSR}}(m,n)\}=1$.\\
If $n\leq m$, we prove (\ref{E017}) by induction on
$m$. Let $\mathbf{b}=(b_{1},\ldots,b_{m})$ be a vertex in ${\rm \mathcal{CSR}}(m,n)$. 
For $i \in [n]$, define $g_{i}=\sum _{\alpha=1}^i b_\alpha$. 
If one of the numbers $g_1,g_2,\ldots,g_n$ is zero, 
then $\tau(\mathbf{b})\geq 1+\tau((b_{i+1},\ldots,b_{m}))$. Thus, by using the induction hypothesis we can complete the proof the claim. Now assume that none of the numbers is zero. Noting that these numbers are between 1 and $n-1$, there are two different indexes $i,j$ such that $g_{i}=g_{j}$. Without loss of generality assume that $i<j$. Thus, we have $\sum_ {\alpha =i+1}^j b_\alpha=0$. So, by using the induction hypothesis we can complete the proof the claim.

Combining (\ref{E016}) and (\ref{E016}), shows that 
\begin{equation}\label{E018}
\min\{\tau(\mathbf{b})\mid \mathbf{b}
\in {\rm \mathcal{CSR}}(m,n)\}= 1+ \lfloor  \frac{m-1}{n}  \rfloor.
\end{equation}
This completes the proof.
}
\end{alij}

\subsection{Complexity of calculating the distance between the vertices  of ${\rm \mathcal{CSR}}(m,n)$}

Here, we prove that computing the distance between two vertices of a given  ${\rm \mathcal{CSR}}(m,n)$ is $ \mathbf{NP}$-hard in terms of $n$ and $m$. In more details, we consider the following problem.

{\em Minimum distance problem.}\\
\textsc{Instance}: Two positive integers $m,n$ and two vectors $v,u$ in $\mathbb{Z}^{m}_{n}$ such that for each vector the summation of its coordinates   modulo $n$ is $0$.\\
\textsc{Question}: Compute the distance between $v$ and $u$ in ${\rm \mathcal{CSR}}(m,n)$.

\begin{alijj}{
We reduce 	 3-Partition problem to our problem in polynomial time.
It is shown that  3-Partition problem is $ \mathbf{NP}$-complete in the strong sense \cite{MR1567289}.

{\em 3-Partition.}\\
\textsc{Instance}: A positive integer $ s \in \mathbb{Z}^{+}$ and $3k$ positive integers $a_1, \ldots, a_{3k} \in \mathbb{Z}^{+}$ such that $  s/4 < a_i < s/2$ for each $1 \leq i \leq 3k$ and $ \sum _{i=1}^{3k} a_i=ks$.\\
\textsc{Question}: Can $\{a_1, \ldots, a_{3k}\}$ be partitioned into $k$ disjoint sets $A_1, \cdots , A_k$ such that  for any $i$, $1 \leq i \leq k$, we have $ \sum _{a \in A_i}a= s$?\\	
	
Let $\mathbf{a}=(a_1, \ldots, a_{3k} ,s)$ be an instance of  3-Partition problem. Put $m=3k$ and $ n=s$. Now, consider the vertex $(a_1, \ldots, a_{3k})$ of ${\rm \mathcal{CSR}}(3k,s)$. Note that we have $  s/4 < a_i < s/2$, so $a_i+a_j$ is less than $s$, thus, in any zero partitioning of $\mathbf{a}=(a_1, \ldots, a_{3k} ,s)$ the size of each partition is at least three. Thus,  $\tau(\mathbf{a})\leq k$. We have
$\tau(\mathbf{a})= k$ if and only if  $\{a_1, \ldots, a_{3k}\}$ can be partitioned into $k$ disjoint sets $A_1, \cdots , A_k$ such that  for each $i$, $1 \leq i \leq k$, we have $ \sum _{a \in A_i}a= s$.
Thus, by Lemma \ref{distance}, we have $d(\mathbf{0},\mathbf{a})=2k$ if and only if 3-Partition problem has  a positive answer.	
}\end{alijj}

\subsection{Automorphism Group of ${\rm \mathcal{CSR}} (m,n)$}
 
In \cite{Rook} it is shown that the automorphism group of ${\rm \mathcal{SR}}(m,n)$ is $\mathcal{S}_m$ for $n>3$, 
and is $\mathcal{S}_m \times \mathbb{Z}_2$ for $n=3$,
where $\mathcal{S}_m$ stands for the permutation group on $m$ elements. 
Finally, in this section we study the automorphism group of ${\rm \mathcal{CSR}}(m,n)$. 
Note that the set of vertices of ${\rm \mathcal{CSR}}(m,n)$ is $\{(a_{1},\ldots,a_{m})\in \mathbb{Z}_{n}^{m}\, |\,\sum_{i=1}^{m}a_{i}\equiv 0 \mod n\}$. 
This set is a subgroup of $\mathbb{Z}_{n}^{m}$ and is isomorphic to the group $\mathbb{Z}^{m-1}_{n}$. 
Let $\mathbb{Z}^{\times}_{n}$ be the multiplicative group of integers modulo $n$. 
We show that 
$\mathbf{Aut}( {\rm \mathcal{CSR}}(m,n))\cong \mathcal{S}_m \times  
\mathbb{Z}^{\times}_{n} \times \mathbb{Z}^{m-1}_{n}$ for $n, m>3$.
 
Before we prove our main result we need to present some lemmas.

For every two distinct numbers $a, b \in [m]$ with $a<b$, 
let $S_{ab}=\{\alpha (e_{a}- e_b) \mid \alpha \in  \mathbb{Z}_{n}\}$, 
where $e_i$ is a vector with $m$ coordinates such that all of its coordinates are all zero, except the $i$-th
coordinate that is equal to one. 
For every $a$ and $b$ with $1 \leq a < b \leq m$ define $\mathcal{P}_{ab}= \{\mathbf{\xi}+S_{ab} \mid \mathbf{\xi} \in V({\rm \mathcal{CSR}}(m,n))\}$.
Note that for each $\mathbf{\xi} \in V({\rm \mathcal{CSR}}(m,n))$, the set $\mathbf{\xi}+S_{ab}$
(that is a subset of $\mathcal{P}_{ab}$) contains all vertices with $m-2$ same coordinates (all coordinates except $a$-{th} and $b$-{th} coordinates). Thus, it is easy to see that $\mathcal{P}_{ab}$ is a partitioning of vertices of ${\rm \mathcal{CSR}}(m,n)$.

\begin{lem} \label{11}
	Let $\psi$ be an automorphism of ${\rm \mathcal{CSR}}(m,n)$ and $n,m > 3$. For every $\xi \in V({\rm \mathcal{CSR}}(m,n))$ and $S_{ab}$, 
	we have $\psi(\xi+S_{ab})=\varsigma+S_{pq}$ for a $\varsigma \in V({\rm \mathcal{CSR}}(m,n))$ and some $p, q \in [m]$. 
\end{lem}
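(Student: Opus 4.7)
\begin{prooff}{
The plan is to identify each coset $\xi + S_{ab}$ as a maximal clique of a specific type in ${\rm \mathcal{CSR}}(m,n)$ and then invoke the classification of maximal cliques together with a distinguishing graph-invariant argument to control $\psi(\xi + S_{ab})$.

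First, I would verify that $\xi + S_{ab}$ is a maximal clique of size $n$. It is a clique of size $n$ by definition, since any two of its vertices differ only in coordinates $a$ and $b$. For maximality, vertex-transitivity of ${\rm \mathcal{CSR}}(m,n)$ (Lemma~\ref{L001}) reduces the question to $\xi = \mathbf{0}$; for any candidate extension $w = \gamma(e_c - e_d) \notin S_{ab}$, a short case analysis on $|\{c,d\} \cap \{a,b\}|$ applied to $w - \alpha(e_a - e_b)$ shows that $w$ is adjacent to at most one non-zero element of $S_{ab}$, so the hypothesis $n > 3$ prevents any extension.

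Next, I would recall the classification of maximal cliques. Mirroring the argument of Theorem~\ref{Th04}(iii) and Corollary~\ref{cliqueunique}, every maximal clique of ${\rm \mathcal{CSR}}(m,n)$ of size at least $3$ is either a \emph{Type 1} coset $\varsigma + S_{pq}$ of size $n$, or a \emph{Type 2} ``star'' $\eta + \{\mathbf{0}, \beta(e_p - e_q) : q \neq p\}$ of size $m$. Since $\psi$ preserves cliqueness and cardinality, $\psi(\xi + S_{ab})$ is a maximal clique of size exactly $n$. When $n \neq m$, the stars have the wrong size, so $\psi(\xi + S_{ab})$ is forced to be Type 1 and the lemma follows immediately.

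The main obstacle is the case $n = m > 3$, where Type 1 and Type 2 cliques share the size $n$ and a priori $\psi$ could swap the two families. To rule this out I would exhibit a graph-theoretic invariant that separates them. The underlying source of such an invariant is that Type 1 cliques are cosets of the cyclic subgroups $S_{pq}$ of the vertex group $V({\rm \mathcal{CSR}}(m,n))$, whereas Type 2 stars are not cosets of any subgroup. A concrete candidate is the rigid family of $\binom{m}{2}$ \emph{parallel classes} $\mathcal{P}_{cd}$ of Type 1 cliques, each partitioning $V({\rm \mathcal{CSR}}(m,n))$ and interacting pairwise in the transversal pattern dictated by the subgroup lattice of the $S_{pq}$'s; any maximal $n$-clique that participates in such a collection of $\binom{m}{2}$ mutually transversal partitions must itself be a coset of some $S_{pq}$. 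Preservation of this invariant by $\psi$ forces $\psi(\xi + S_{ab})$ to be Type 1 again, yielding the claimed form $\varsigma + S_{pq}$.
}\end{prooff}
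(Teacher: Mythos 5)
Your idea of working with \emph{maximal} (rather than maximum) cliques is appealing, and your verification that $\xi+S_{ab}$ is a maximal clique of size $n$ for $n>3$ is correct; for $n<m$ this is arguably cleaner than the paper's three-vertex argument. However, your classification of maximal cliques is false when $n$ is even. For $\alpha=n/2$ the sets $N_2$ and $N_3$ of common neighbours of $\mathbf{0}$ and $\alpha(e_a-e_b)$ are joined by a perfect matching (this is noted in the proof of Theorem~\ref{Th04}(iii)), and this produces a third family of maximal cliques: the translates of $\{\mathbf{0},\,\alpha(e_a-e_b),\,\alpha(e_t-e_b),\,\alpha(e_a-e_t)\}$, which have size $4$. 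For instance, in ${\rm \mathcal{CSR}}(5,4)$ the set $\{(0,0,0,0,0),(2,2,0,0,0),(0,2,2,0,0),(2,0,2,0,0)\}$ is a maximal clique that is neither a coset of any $S_{pq}$ nor a star (one checks directly that any common neighbour of all four vertices already lies in the set). Consequently your size argument for the case $n\neq m$ breaks down exactly when $n=4$, which is allowed under the hypothesis $n,m>3$: a maximal clique of size $4$ need not be a coset, and you supply no invariant separating cosets from these exceptional $4$-cliques.

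The case $n=m$ is not actually proved. You propose to distinguish cosets from stars by ``participation in a system of $\binom{m}{2}$ mutually transversal partitions,'' but (i) you never show that a star cannot belong to a partition of $V({\rm \mathcal{CSR}}(m,n))$ into maximal $n$-cliques with the required transversality pattern, and (ii) the assertion that $\psi$ carries the system $\{\mathcal{P}_{ab}\}$ to a system of the same kind is essentially the content of Lemmas~\ref{samepart} and \ref{permutcoord}, which the paper derives \emph{from} the present lemma; invoking preservation of that structure here is circular unless you first exhibit it as a purely graph-theoretic invariant. The paper instead separates the unique coset-type maximum clique through a fixed edge from the two core-type ones by a concrete local test (its ``Property I'' concerning pairs of adjacent vertices each having exactly one neighbour in each of two cliques); some explicit argument of that kind is needed and is missing from your sketch.
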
 

\begin{proof}{
		Any graph isomorphism maps each maximum clique to a maximum clique. 
		So, using  Corollary \ref{cliqueunique},  we can consider three cases.
		\\
		$\bullet$ $n< m$: Let $x, y, z$ be three distinct vertices in $\xi+S_{ab}$.
		These vertices are adjacent  to each others, so  $\psi(x), \psi(y)$ and $\psi(z)$ are adjacent to each others.
		Without loss of generality assume that $\psi(x)-\psi(z)=\alpha (e_{1}-e_{2})$. 
		Thus for  $\psi(x)-\psi(y)$ 
		we have three cases: $\psi(x)-\psi(y)$ is equal to $\beta (e_{1}- e_{2})$ 
		or $\beta (e_{1}- e_{c})$ or $\beta (e_{2}- e_{c})$, where $c\notin \{1,2\}$.
		For the first case, we have $\psi(x), \psi(y), \psi(z) \in \psi(x)+S_{12}$.
		For the second case, we have $\psi(y)-\psi(z)=\alpha (e_{1}- e_{2})-\beta (e_{1}- e_{c})$. Note that $c\neq 2 $ and the vertices  $\psi(y),\psi(z)$ are adjacent.
		Hence $\alpha=\beta$, therefore $\psi(y)-\psi(z)=-\alpha (e_{2}- e_{c})$. So,
		$ \psi(z) - \psi(y) =\alpha (e_{2}- e_{c})$. Consequently, we have $\psi(x) =\xi'+ \alpha e_1$, $\psi(y) =\xi'+ \alpha e_c$ and $\psi(z) =\xi'+ \alpha e_2$, where $\xi' \in \mathbb{Z}^{m}_{n}$.
		Consequently by Corollary \ref{cliqueunique}, $\psi(x), \psi(y)$ and $\psi(z)$ 
		belong to a maximum clique, but $x, y, z$ do not belong to a maximum clique. 
		Thus the second case cannot be happened. 
		The third case is similar to the second one. Hence, $\psi(x),\psi(y)$ 
		and $\psi(z)$ belong to a coset of $S_{pq}$. This completes the proof for the case $n<m$.
		\\		
		$\bullet$ $n>m$: In this case by Corollary \ref{cliqueunique}, 
		every maximum clique has the form $\xi+S_{ab}$, for some $\xi, a$ and $b$.
		Any isomorphism maps a maximum clique to a maximum clique, so $\psi(\xi+S_{ab})$ has the form $\varsigma+S_{pq}$.
		\\		
		$\bullet$ $n=m$: 
		The graph has two types of maximum cliques: 
		The vertices with $m-2$ equal coordinates (coset-type) and the set of vertices $\{u+\beta e_i: i \in [m]\}$ 
		where $u$ is a vertex of the graph and $\beta \in  \mathbb{Z}_n$ (core-type). 
		Note that every two adjacent vertices $x$ and $y=x+ \gamma (e_i - e_j)$ 
		belong to exactly one maximum clique $M$ of coset-type and two of core-types $M^*$ and $M^{**}$. In fact,
		$M=x+\{\alpha(e_i-e_j)\mid \alpha\in \mathbb{Z}_n\}$, $M^{*}=
		x+\gamma e_i+(-\gamma)\{e_i | i \in [m]\}$ and $M^{**}=x-\gamma e_j+(\gamma)\{e_i | i \in [m]\}$. 
		Let $a=x+\alpha(e_i -e_k)$ and $b=x+\beta(e_i -e_k)$, 
		where  $\alpha, \beta \neq \gamma$ and $k \neq i,j$. 
		
		So $\psi$ maps exactly one of $M, M^*, M^{**}$ to a coset-type clique.  
		There is no pair of adjacent vertices  $a$ and $b$ in the graph 
		such that each of $a$ and $b$ has exactly 1 neighbor in each of $M^*$ and $M^{**}$ 
		(\textit{Property I}). But there are adjacent vertices $a$ and $b$ 
		such that each of them has exactly 1 neighbor in each of $M$ and $M^*$. 
		for example let $a=x+\alpha(e_i -e_k)$ 
		and $b=x+\beta(e_i -e_k)$ for distinct numbers $\alpha, \beta \neq \gamma$ 
		and $k \neq i,j$. It is easy to see that $(\psi(M^*), \psi(M^{**}))$ 
		has Property I but $(\psi(M), \psi(M^{*}))$ and $(\psi(M), \psi(M^{**}))$ 
		does not have it. Therefore, $\psi(M)$ is coset-type (and so $\psi(M^*)$ 
		and $\psi(M^{**})$ have another type).
}\end{proof}

\begin{lem}\label{samepart}
	Let $n \geq 4$. If $\psi(\xi+S_{ab})=\varsigma+S_{pq}$ 
	and $\psi(\xi'+S_{ab})=\varsigma'+S_{rs}$, then $\{p, q\} = \{r, s\}$.
\end{lem}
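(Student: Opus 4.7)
The plan is to argue by contradiction: assume $\{p,q\}\neq\{r,s\}$ and use a parallelogram $4$-cycle to derive an impossibility. By composing $\psi$ with a translation and using vertex-transitivity (Lemma \ref{L001}), I may assume $\xi=\varsigma=0$, so that $\psi(S_{ab})=S_{pq}$. Any coset $\xi'+S_{ab}$ is reachable from $S_{ab}$ in $\Gamma/S_{ab}$ via a chain of generator shifts $e_c-e_d$, and the relation ``$\psi$-images lie in a common parallel class'' is transitive. Hence it suffices to handle the single-shift case $\xi'-\xi=e_c-e_d$ with $\{c,d\}\neq\{a,b\}$. Since $m>3$, I would first treat the disjoint subcase $\{c,d\}\cap\{a,b\}=\emptyset$; the overlap subcase $|\{c,d\}\cap\{a,b\}|=1$ is handled by a similar but more delicate argument.

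In the disjoint subcase, a quick coordinate calculation shows that the edges between $D:=S_{ab}$ and $D':=(e_c-e_d)+S_{ab}$ form a perfect matching of size $n$, given by $x\mapsto x+(e_c-e_d)$ for $x\in D$. Applying $\psi$ yields $n$ matching edges between $\psi(D)=S_{pq}$ and $\psi(D')=\varsigma'+S_{rs}$.

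The key step is a parallelogram $4$-cycle. Fix $x_1\in D$ and let $y_1=x_1+(e_c-e_d)\in D'$. For each $\gamma\in\mathbb{Z}_n\setminus\{0\}$, set $x_2=x_1+\gamma(e_a-e_b)$ and $y_2=y_1+\gamma(e_a-e_b)$, so that $x_1,x_2,y_2,y_1$ is a $4$-cycle with opposite sides in $S_{ab}$ and $S_{cd}$. Its $\psi$-image is a $4$-cycle whose opposite sides lie in $S_{pq}$ and $S_{rs}$. Writing $\psi(x_2)-\psi(x_1)=\gamma'(e_p-e_q)$ and $\psi(y_2)-\psi(y_1)=\delta'(e_r-e_s)$ with nonzero $\gamma'=\gamma'(\gamma)$ and $\delta'=\delta'(\gamma)$, and with $\gamma\mapsto\gamma'$ a bijection of $\mathbb{Z}_n\setminus\{0\}$, the adjacency of $\psi(x_2)$ and $\psi(y_2)$ requires
\[
v+\delta'(e_r-e_s)-\gamma'(e_p-e_q)
\]
to have support exactly $2$, where $v:=\psi(y_1)-\psi(x_1)$ is fixed and of support $2$.

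When $\{p,q\}\cap\{r,s\}=\emptyset$, the vector $w:=\delta'(e_r-e_s)-\gamma'(e_p-e_q)$ has support exactly $\{p,q,r,s\}$, and a coordinate-by-coordinate analysis shows that the size-$2$ constraint on $v+w$ forces $\mathrm{supp}(v)\subseteq\{p,q,r,s\}$ and pins $\gamma'$ (hence $\delta'$) to a single value determined by $v$. This contradicts the $n-1\geq 3$ distinct values of $\gamma'$ arising as $\gamma$ varies, using $n\geq 4$. The overlap case $|\{p,q\}\cap\{r,s\}|=1$, which I expect to be the main obstacle, follows by a finer case split depending on whether $\gamma'=\delta'$ or not; in either situation the degeneracy of $w$ is compensated by the extra constraints obtained from varying $\gamma$ over $n-1$ values.
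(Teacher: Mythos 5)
Your overall strategy (contradiction, reduce to ``adjacent'' cosets via connectivity, then a case analysis on how $\{p,q\}$ and $\{r,s\}$ intersect) is the same as the paper's, and your fully worked case --- domain shift $e_c-e_d$ disjoint from $\{a,b\}$ and $\{p,q\}\cap\{r,s\}=\emptyset$ --- is correct: the perfect-matching observation, the forced inclusion $\mathrm{supp}(v)\subseteq\{p,q,r,s\}$, and the pinning of $\gamma'$ (or $\delta'$) against its $n-1\geq 3$ values all check out. However, two of the cases you defer are genuine gaps, and the first one cannot be avoided by your reduction.

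First, the domain-side overlap case $|\{c,d\}\cap\{a,b\}|=1$ is \emph{necessary}: the shifts $e_c-e_d$ with $\{c,d\}\cap\{a,b\}=\emptyset$ generate, modulo $S_{ab}$, only the subgroup $\{x\in\Gamma\mid x_a+x_b=0\}$, so chains of disjoint shifts do not connect all cosets of $S_{ab}$ (e.g.\ $e_a-e_c$ has $x_a+x_b=1$). In this case the structure you rely on breaks: the bipartite graph between $D=\xi+S_{ab}$ and $D'=\xi+(e_a-e_u)+S_{ab}$ is $2$-regular (each $x\in D$ has exactly the two neighbours $x+(e_a-e_u)$ and $x+(e_b-e_u)$ in $D'$), not a perfect matching, so ``the'' matched partner $y_1$ of $x_1$ is not well defined and the parallelogram bookkeeping has to be redone. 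This is where the paper's proof spends most of its effort (the $\mathcal{N}$-case analysis), and ``a similar but more delicate argument'' is a deferral, not a proof.

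Second, in the image-side overlap case $|\{p,q\}\cap\{r,s\}|=1$ (say $r=p$), the mechanism you name --- splitting on $\gamma'=\delta'$ and ``compensating by varying $\gamma$'' --- does not close the sub-subcase $\mathrm{supp}(v)=\{q,s\}$: there the support condition only forces $\delta'=\gamma'$ for all but at most two values of $\gamma$, which pins nothing. The missing ingredient is disjointness of the images: if $\mathrm{supp}(v)\subseteq\{p,q,s\}$ then $v\in S_{pq}-S_{ps}$, so $\psi(D)\cap\psi(D')\neq\emptyset$, contradicting $D\cap D'=\emptyset$; only after excluding these supports does the pinning argument finish the remaining cases. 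So the lemma is provable along your lines, but as written the proof is incomplete in both of these respects.
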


\begin{proof}{
		To the contrary suppose that for two parts of $\mathcal{P}_{ab}$, 
		like $P$ and $P'$ we have  $\psi(P) \in \mathcal{P}_{pq}$ , $\psi(P') \in \mathcal{P}_{rs}$ 
		such that $\{p, q\}\neq \{r, s\}$.
		Since the graph $\mathcal{CSR}(m,n)$ is connected, we can assume that $P$ and $P'$ 
		are adjacent (in other words, every vertex of $P$ has some neighbors in $P'$).
		Now two cases can be considered.
		\\
		$\bullet $  If $\{p, q\} \cap \{r, s\}= \emptyset$, 
		then without loss of generality suppose that $p, q, r, s$ are $1, 2, 3, 4$, respectively. 
		Also assume that  $\psi(P)=(y_1,\ldots, y_m)+S_{12}$ and $\psi(P')=(z_1, \ldots, z_m)+S_{34}$. 
		Since $n \geq 3$, 
		there exists $\mathcal{Y}=(y^*_1, y^*_2, y_3, \ldots, y_m) \in \psi(P)$ 
		such that $y^*_1 \neq z_1$ and $y_2^* \neq z_2$. 
		Every vertex of $P$ has some neighbors in $P'$. So, the vertex $\mathcal{Y}$ has some neighbors in $\psi(P')$. Noting that  $y_1^* \neq z_1$ and $y_2^* \neq z_2$, we conclude that the neighbor of $\mathcal{Y}$ 
		in $\psi(P')$ is $(z_1, z_2, y_3, y_4, y_5, \ldots, y_m)$. 
		But this vertex is in $\psi (P)$. This is a contradiction.
		\\
		$\bullet $ If $\{p, q\} \cap \{r, s\} \neq \emptyset$,
		then without loss of generality assume  that $p=1$, $q=r=2$, $s=3$, 
		$\psi(P)= (y_1, y_2, y_3, ... y_m)+ S_{12}=\{(a, b, y_3, ... y_m) \in V\mid a, b \in \mathbb{Z}_n \}$ 
		and $\psi(P')=(z_1, \ldots, z_m) + S_{23}$. 
		Let $\mathcal{N}$ be the number of integers $i$ such that  $i \geq 4$ and $y_i \neq z_i$. 
		\\	
		$\bullet \bullet  $	If $\mathcal{N} \geq 3$, then $\psi(P)$ and $\psi(P')$ are not adjacent. 
		This is a contradiction.
		\\	
		$\bullet \bullet  $	If $\mathcal{N} =2$, then since $n \geq 2$, there is $w \in \mathbb{ Z}_n$ such that $w \neq z_1$. 
		The vertex $ (w, -w -\sum_{i =3}^{m} y_i, y_3, \ldots, y_m)$  
		is in $\psi (P)$, but does not have any neighbor in $\Psi(P')$. This is a contradiction.
		\\	
		$\bullet \bullet  $	 If $\mathcal{N} = 1$, then since $n \neq 3$, 
		there are $w^*, w^{**} \in \mathbb{Z}_n $ 
		such that $w^*, w^{**} \neq z_1$. 
		Then the neighbor of $(w^*, -w^* - \sum_{i \geq 3} y_i, y_3, \ldots, y_m)$ in $\psi(P')$ 
		should be  $$(z_1, -w^* - \sum_{i =3}^{m} y_i, -z_1 + w^* +\sum_{i \geq 3} y_i - \sum_{i \geq 4} z_i , z_4, \ldots, z_m).$$
		Therefore $y_3 = -z_1 + w^* +\sum_{i \geq 3} y_i - \sum_{i \geq 4} z_i$, 
		and consequently $w^* =z_1 - \sum_{i \geq 4} y_i + \sum_{i \geq 4} z_i$. 
		By the same computation we can reach to the same equality   for $w^{**}$. This leads to a contradiction by  noting that $w^* \neq w^{**}$.
		\\	
		$\bullet \bullet  $ If $\mathcal{N} =0$, then $\psi(P)$ and $\psi(P')$ have a common vertex 
		(that is the vertex $(z_1, -z_1- \sum_{i \geq 3} y_i , y_3, y_4, \ldots, y_m)$). This is a contradiction.
}\end{proof}

\begin{lem}\label{permutcoord}
	There is a permutation $\sigma$ over $\mathbb{Z}_n$
	such that for every $a$ and $b$, $\psi (\mathcal{P}_{ab})= \mathcal{P}_{\sigma(a)\sigma(b)}$.
\end{lem}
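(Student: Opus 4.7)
The plan is to extract the permutation $\sigma$ from the induced action of $\psi$ on the family of partitions $\{\mathcal{P}_{ab} : 1 \le a < b \le m\}$. Combining Lemma~\ref{11} (each coset $\xi + S_{ab}$ is sent to some coset $\varsigma + S_{pq}$) with Lemma~\ref{samepart} (the image pair $\{p,q\}$ depends only on $\{a,b\}$ and not on the particular representative $\xi$), we obtain a well-defined map $\pi : \binom{[m]}{2} \to \binom{[m]}{2}$ satisfying $\psi(\mathcal{P}_{ab}) = \mathcal{P}_{\pi(\{a,b\})}$. Running the same construction for $\psi^{-1}$ supplies an inverse, so $\pi$ is a bijection, and the whole task reduces to showing that $\pi$ is induced by a permutation $\sigma$ of $[m]$.

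Next I would isolate a graph-theoretic invariant that distinguishes intersecting pairs from disjoint pairs. Fix a vertex $\xi$ and two cosets $K_1 = \xi + S_{ab}$, $K_2 = \xi + S_{cd}$, and count edges of $\mathcal{CSR}(m,n)$ between $K_1 \setminus \{\xi\}$ and $K_2 \setminus \{\xi\}$. A direct computation shows that when $\{a,b\} \cap \{c,d\} = \{a\}$ the vertex $\xi + \alpha(e_a - e_b)$ has a unique neighbor in $K_2 \setminus \{\xi\}$, namely $\xi + \alpha(e_a - e_c)$, so these edges form a perfect matching of size $n-1$; whereas when $\{a,b\} \cap \{c,d\} = \emptyset$ the difference $\alpha(e_a-e_b) - \beta(e_c-e_d)$ has four non-zero coordinates, so no edges exist between the two punctured cliques. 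Because $\psi$ preserves these edge counts and carries $\xi$ to $\psi(\xi)$, the map $\pi$ must preserve the ``share a common index'' relation on $\binom{[m]}{2}$.

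This identifies $\pi$ with an automorphism of the Johnson graph $J(m,2)$. For $m \ge 5$ the classical fact $\mathbf{Aut}(J(m,2)) \cong \mathcal{S}_m$ immediately yields a permutation $\sigma$ of $[m]$ with $\pi(\{a,b\}) = \{\sigma(a),\sigma(b)\}$. For $m = 4$ the graph $J(4,2)$ is the cocktail-party graph $K_{2,2,2}$ and admits the extra complementation automorphism $\{a,b\} \mapsto [4] \setminus \{a,b\}$, so a second invariant is needed. To rule out complementation I would distinguish the two types of triangle in $J(4,2)$: the ``triangle'' triple $S_{ab}, S_{ac}, S_{bc}$ on three indices generates a subgroup of order $n^2$ and the induced subgraph on $\xi + \langle S_{ab},S_{ac},S_{bc}\rangle$ is isomorphic to $\mathcal{CSR}(3,n)$, whereas the ``star'' triple $S_{ab}, S_{ac}, S_{ad}$ on four indices generates a subgroup of order $n^3$ inducing $\mathcal{CSR}(4,n)$. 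Since $\psi$ preserves the isomorphism type of induced subgraphs on invariant vertex sets, $\pi$ cannot swap triangles with stars, which excludes complementation and forces $\pi \in \mathcal{S}_4$.

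The main obstacle is the case $m=4$: preservation of the ``share an index'' relation alone is insufficient because of the exceptional complementation automorphism of $J(4,2)$, so the second invariant (the subgroup order, equivalently the number $n^2$ versus $n^3$ of vertices in the induced subgraph on a coplanar triple of cliques through $\xi$) is essential. For $m \ge 5$ the rigidity of the Johnson graph handles everything uniformly once the first invariant has been established.
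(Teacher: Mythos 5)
Your proposal is correct and follows the same overall strategy as the paper --- both arguments use Lemmas \ref{11} and \ref{samepart} to get a well-defined bijection $\pi$ on $\binom{[m]}{2}$ with $\psi(\mathcal{P}_{ab})=\mathcal{P}_{\pi(\{a,b\})}$, and both then show by an edge count that $\pi$ preserves the relation ``share a common index'' --- but the two proofs differ in the invariant used and, more importantly, in how (and whether) they finish. The paper's invariant is that for disjoint image pairs some two parts are joined by exactly $4$ edges while for intersecting pairs no two parts ever are; yours is local and a little cleaner: the two punctured cliques through a common vertex $\xi$ are joined by a perfect matching of $n-1$ edges when the index pairs meet and by $0$ edges when they are disjoint, and $\psi$ preserves this count while sending the parts through $\xi$ to the parts through $\psi(\xi)$. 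The substantive divergence comes next: the paper passes directly from ``intersecting pairs map to intersecting pairs'' to ``all pairs containing $a$ map to pairs containing a common index,'' an inference that is valid for $m\geq 5$ (a family of at least four pairwise-intersecting $2$-sets must be a star) but fails for $m=4$, where three pairwise-intersecting $2$-sets can form a triangle --- precisely the complementation automorphism of $J(4,2)$ you point out. Your second invariant for $m=4$ (the coset generated by a triangle triple $S_{ab},S_{ac},S_{bc}$ has $n^2$ elements, versus $n^3$ for a star triple) closes this case, so your route yields a complete proof where the paper's is silently incomplete at $m=4$. The one step you should make explicit is why $\psi$ carries $\xi+\langle S_{ab},S_{ac},S_{bc}\rangle$ onto the corresponding coset for the image triple: that set is the connected component of $\xi$ in the spanning subgraph whose edges lie inside parts of $\mathcal{P}_{ab}\cup\mathcal{P}_{ac}\cup\mathcal{P}_{bc}$, and $\psi$ maps this subgraph to the one attached to the image triple, so components map to components and the cardinality comparison $n^2\neq n^3$ rules out swapping triangles with stars.
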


\begin{proof}{
		For a partitioning $\mathcal{P}$, 
		we denote the image of the parts of $\mathcal{P}$ under the automorphism $\psi$ by $\psi(\mathcal{P})$.
		By Lemma \ref{samepart}, 
		we know that for every $a, b, c \in \mathbb{Z}_n$, 
		$\psi (\mathcal{P}_{ab})= \mathcal{P}_{\alpha \beta}$ and $\psi(\mathcal{P}_{ac})=\mathcal{P}_{\gamma\delta}$ 
		for some $\alpha, \beta, \gamma, \delta$. 
		
		Since we assumed that $n>3$, in order
		to prove  the lemma it is enough to show that  $\{\alpha, \beta\}\cap \{\gamma, \delta\} \neq \emptyset$ (Property 1). 
		Note that if Property 1 is true, then for every $a, b_1, \ldots, b_i$ there are $\alpha, \beta_1, \ldots, \beta_i$ such that
		$\psi (\mathcal{P}_{ab_1}), \psi (\mathcal{P}_{ab_2}), \ldots, \psi (\mathcal{P}_{ab_i})$ are
		$\mathcal{P}_{\alpha \beta_1}, \mathcal{P}_{\alpha \beta_2}, \ldots, \mathcal{P}_{\alpha \beta_i}$, respectively.

		We prove Property 1 by contradiction. In order to reach to a contradiction we prove two opposite things.\\
	\\
	{\bf Fact 1.}		If  $\{\alpha, \beta\}\cap \{\gamma, \delta\} = \emptyset$, then there are parts $S \in \mathcal{P}_{\alpha \beta}$ and $S' \in \mathcal{P}_{\gamma\delta}$ such that the number of edges from $S$ to $S'$ is exactly $4$.
	\\
	{\bf Proof of Fact 1.}		
	Note  that $S_{\alpha \beta}$ is $\{c(e_\alpha -e_\beta): c \in \mathbb{Z}_n\}$. 
	For every distinct $\alpha, \beta, \gamma, \delta \in \mathbb{Z}_n$, 
	there are two parts $S=\mathbf{0}+ S_{\alpha \beta}= \{c(e_\alpha - e_\beta)\}$ 
	and $S'=(e_{\alpha}-e_{\gamma}) + S_{\gamma \delta}=\{e_{\alpha}+ (c'-1)e_{\gamma}-c'e_\delta)\}$, 
	such that exactly 4 edges of $\mathcal{CSR}(m,n)$ have one end-vertex in $S$ and another end-vertex in $S'$. 
	In fact, one end-vertex of  an edge is in $S$ and the other end-vertex is in $S'$ if and only if $c=0, 1$ and $c'=0, 1$. 
	For example if $\alpha =1, \beta=2 , \gamma=3$ and $\delta=4$, 
	those 4 edges are $\{(0^m),(1, 0, -1, 0^{m-3})\}$, $\{(0^m),(1, 0, 0, -1, 0^{m-4})\}$, 
	$\{(1, -1, 0^{m-2}),(1, 0, -1, 0^{m-3})\}$ and $\{(1, -1, 0^{m-2}),(1, 0, 0, -1, 0^{m-4})\}$.   $ \clubsuit$
	\\
	\\
{\bf Fact 2.} There are no parts $S \in \mathcal{P}_{ab}$ and $S' \in \mathcal{P}_{ac}$ such that the number of edges between $S$ and $S'$ is exactly $4$.
\\
{\bf Proof of Fact 2.}
Without loss of generality suppose that $a=1, b=2$ and $c=3$.
		Let $S \in \mathcal{P}_{12}$ and $S' \in \mathcal{P}_{13}$ be two parts.
		So there are $x_3, x_4, x_5, \ldots, x_m \in \mathbb{Z}_n$ and $y_2, y_4, y_5,\ldots, y_m \in \mathbb{Z}_n$ such that
		$S=\{(a, b, x_3, x_4, x_5, \ldots, x_m) \in  V \mid a, b \in \mathbb{Z}_n\}$ 
		and $S'=\{(a, y_2, b, y_4, y_5, \ldots, y_m) \in  V \mid a, b \in \mathbb{Z}_n \}$.
		Denote the number of edges between $S$ and $S'$ by $\mathcal{E}$.
		Define $X=|\{i \mid 3 \leq i \leq m, x_i \neq y_i\}|$.
		By considering the possible values of $X$, we show that $\mathcal{E} \neq 4$.
\\	
		$\bullet $ $X \geq 3$: In this case  by the definition of edges of ${\rm \mathcal{CSR}} (m,n)$, we have $\mathcal{E}=0$.
\\		
		$\bullet $ $X=2$: In this case if there is an edge $e=(x_1, \ldots, x_m)(y_1, \ldots, y_m)$ from $S$ to $S'$, then we have $x_1=y_1$, $x_2=y_2$ and $x_3=y_3$.
		So, by the definition of the graph ${\rm \mathcal{CSR}} (m,n)$ there is at most one such edge. In other words, $\mathcal{E} \leq 1$.
\\		
		$\bullet$ $X=1$: In this case if there is an edge $e=(x_1,\ldots, x_m)(y_1, \ldots, y_m)$ from $S$ to $S'$, then for each $i \in \{1, 2, 3\}$, that edge is a unique edge
 such that $x_i \neq y_i$.
Thus $\mathcal{E}=3$.
\\		
		$\bullet $ $X=0$: Similar to the previous case, there are exactly two $i,j \in \{1, 2, 3\}$ such that $x_i \neq y_i$ and $x_j \neq y_j$.
		For $x_2=y_2$ there are exactly $n-1$ edges from $S$ to $S'$.
		Also for $x_3=y_3$ there are exactly $n-1$ edges from $S$ to $S'$.
		So, we have $\mathcal{E} \geq 2n-2$. Note that $n>3$, so we have $\mathcal{E}>4$.  $ \clubsuit$

Having both Facts 1, and 2, we reach to a contradiction. This completes the proof of lemma.
	}
\end{proof}

\begin{lem}\label{3.12}
	There are $m$ permutations $\psi_1, \ldots, \psi_m$ over $\mathbb{Z}_n$ 
	and a permutation $\sigma$ on $[m]$ such that $\psi( \sum_i x_{i}e_{i})= \sum_i \psi_i(x_{i})e_{\sigma(i)}$. 
	In other words, the restriction of $\psi$ on each coordinate is a permutation over $\mathbb{Z}_n$.
\end{lem}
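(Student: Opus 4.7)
The plan is to bootstrap from Lemma \ref{permutcoord}, which I would first invoke to obtain a permutation $\sigma$ on $[m]$ such that $\psi(\mathcal{P}_{ab}) = \mathcal{P}_{\sigma(a)\sigma(b)}$ for every pair $a<b$. Unpacking this statement: whenever two vertices lie in a common coset of $S_{ab}$, their $\psi$-images lie in a common coset of $S_{\sigma(a)\sigma(b)}$, and so agree in every coordinate other than $\sigma(a)$ and $\sigma(b)$. This single ``coordinate-pair-respecting'' property is the only input to the argument.

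The heart of the proof is the claim that for each $i\in[m]$, the $\sigma(i)$-th coordinate of $\psi(\xi)$ depends only on the $i$-th coordinate of $\xi$. Granting this, I would define $\psi_i(c)$ to be the $\sigma(i)$-th coordinate of $\psi(\xi)$ for any vertex $\xi$ with $x_i=c$, and the identity $\psi(\sum_i x_i e_i)=\sum_i \psi_i(x_i)e_{\sigma(i)}$ follows by reading off each coordinate of the image. To prove the claim, take two vertices $\xi,\xi'$ with $x_i=x_i'$. The difference $\xi-\xi'$ is a sum-zero vector of $\mathbb{Z}_n^m$ whose $i$-th coordinate vanishes, and the set of all such vectors is precisely the subgroup generated by $\{e_a-e_b\mid a,b\in[m]\setminus\{i\}\}$ (this is the standard fact that $\{e_a-e_b\}$ generates the sum-zero subgroup, applied after dropping the $i$-th coordinate). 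Hence $\xi-\xi'=\sum_k \alpha_k(e_{a_k}-e_{b_k})$ with every $a_k,b_k\neq i$, and the partial sums produce a chain $\xi=\eta_0,\eta_1,\ldots,\eta_r=\xi'$ of vertices in which consecutive pairs $\eta_{s-1},\eta_s$ lie in a common coset of some $S_{a_sb_s}$ with $a_s,b_s\neq i$. By the opening paragraph, $\psi(\eta_{s-1})$ and $\psi(\eta_s)$ then agree in every coordinate outside $\{\sigma(a_s),\sigma(b_s)\}$, and in particular in coordinate $\sigma(i)$. Induction along the chain yields $\psi(\xi)_{\sigma(i)}=\psi(\xi')_{\sigma(i)}$, as desired.

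To finish, I would verify that each $\psi_i$ is a permutation of $\mathbb{Z}_n$ by checking injectivity. If $\psi_i(c)=\psi_i(c')$, choose any $j\neq i$ and set $\xi'=\xi+(c'-c)(e_i-e_j)$; then $\xi,\xi'$ lie in one coset of $S_{ij}$, so $\psi(\xi),\psi(\xi')$ lie in one coset of $S_{\sigma(i)\sigma(j)}$ and differ only in coordinates $\sigma(i),\sigma(j)$. Since they already agree in coordinate $\sigma(i)$ and both images have coordinate sum $0\bmod n$, they must agree in coordinate $\sigma(j)$ as well, hence $\psi(\xi)=\psi(\xi')$ and $c=c'$. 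Finiteness of $\mathbb{Z}_n$ then upgrades injectivity to bijectivity. I expect the main obstacle to be the well-definedness step: one must identify the correct spanning subgroup and then orchestrate it with the \emph{simultaneous} preservation of all partitions $\mathcal{P}_{ab}$. Once this has been done, the extraction of the permutations $\psi_i$ and the confirmation that they are bijective is essentially bookkeeping.
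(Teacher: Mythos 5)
Your proposal is correct and follows essentially the same route as the paper: both invoke Lemma \ref{permutcoord} and then connect two vertices sharing the $i$-th coordinate by a chain of coset-moves in the $S_{ab}$ with $a,b\neq i$, concluding that the $\sigma(i)$-th coordinate of the image is unchanged along the chain. You are in fact slightly more careful than the paper, since you justify the existence of such a chain via the generating set of the sum-zero subgroup and explicitly verify that each $\psi_i$ is injective, both of which the paper leaves implicit.
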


\begin{proof}{
		Let $U$ and $U'$ be two vertices of ${\rm \mathcal{CSR}}(m,n)$
		with a same value in the $j$-th coordinate.
		There is a path $U=U_0, U_1, \ldots, U_k=U'$
		such that all vertices of it have the same $j^{th}$ coordinate.
		Then for every $0 < i \leq k$, there exist $a_{i}, b_{i} \neq j$ 
		such that $U_i- U_{i-1}$ belongs to $S_{a_{i}b_{i}}$. 
		By Lemma \ref{permutcoord} there is a permutation $\sigma$ 
		such that $\psi (\mathcal{P}_{a_ib_i})= \mathcal{P}_{\sigma(a_i)\sigma(b_i)}$.
		Since $a_{i}, b_{i} \neq j$, all vertices in  $\mathcal{P}_{\sigma(a_i)\sigma(b_i)}$
		have a same $\sigma (j)$-th coordinate.
		Therefore $\sigma (j)$-th coordinate of all $\psi(U_i)$'s are same. 
		Also, $\sigma (j)$-th coordinate of $\psi(U)$ and $\psi(U')$ are same. 
		Since $j$, $U$ and $U'$ are selected arbitrarily, the proof is completed.
}\end{proof}

\begin{lem} \label{55}
	There exists $d_1, \ldots, d_m \in \mathbb{Z}_n$ 
	such that $\psi_1 + d_1 \equiv \ldots \equiv \psi_m +d_m$.
\end{lem}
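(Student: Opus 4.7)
My plan is to extract the desired relation directly from the fact that the image of $\psi$ must respect the defining constraint $\sum_{k=1}^{m} x_k \equiv 0 \pmod{n}$ of $V({\rm \mathcal{CSR}}(m,n))$. The key observation is that once we plug in very simple vertices (the zero vertex and the two-coordinate vertices $a(e_i-e_j)$) and apply Lemma \ref{3.12}, the constraint forces each $\psi_i$ to differ from a single universal function only by an additive constant.

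Concretely, I would first set $c_i:=\psi_i(0)$ for each $i\in[m]$. Evaluating the formula in Lemma \ref{3.12} at the vertex $\mathbf{0}=0^{m}$ and requiring $\psi(\mathbf{0})\in V({\rm \mathcal{CSR}}(m,n))$ yields $\sum_{i=1}^{m} c_i\equiv 0\pmod{n}$. Next, for any $i\neq j$ and any $a\in\mathbb{Z}_n$, the vector $a(e_i-e_j)$ is a vertex, and Lemma \ref{3.12} gives
$$\psi\bigl(a(e_i-e_j)\bigr)=\psi_i(a)\,e_{\sigma(i)}+\psi_j(-a)\,e_{\sigma(j)}+\sum_{k\neq i,j}c_k\,e_{\sigma(k)}.$$
Imposing that this image lies in $V({\rm \mathcal{CSR}}(m,n))$ and subtracting the identity $\sum_k c_k\equiv 0$ yields
$$\psi_i(a)-c_i\equiv -\bigl(\psi_j(-a)-c_j\bigr)\pmod{n}\qquad\text{for all }i\neq j\text{ and all }a\in\mathbb{Z}_n.$$

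Now I would exploit that $m>3$: for fixed $a$, the left-hand side depends only on $i$ and the right-hand side only on $j$. Given any two distinct indices $i,i'\in[m]$, pick some $j\in[m]\setminus\{i,i'\}$ (possible because $m\geq 3$) and deduce $\psi_i(a)-c_i=-(\psi_j(-a)-c_j)=\psi_{i'}(a)-c_{i'}$. Hence $\psi_i(a)-c_i$ is independent of $i$; call the common value $\phi(a)$. Setting $d_i:=-c_i\in\mathbb{Z}_n$ we obtain $\psi_i(x)+d_i=\phi(x)$ for every $i$ and every $x\in\mathbb{Z}_n$, which is exactly the claim.

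The only mild bookkeeping issue is the presence of the permutation $\sigma$ on the coordinates provided by Lemma \ref{3.12}: since $\sigma$ merely reorders the summands $e_{\sigma(k)}$, the sum-of-coordinates condition is invariant under $\sigma$, and so $\sigma$ plays no role in the computation. There is no serious obstacle; the hypothesis $m>3$ is more than enough to run the ``three-index'' argument that pins down $\phi$.
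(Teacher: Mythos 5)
Your proof is correct and takes essentially the same approach as the paper: both arguments evaluate $\psi$ on explicitly chosen simple vertices via Lemma \ref{3.12} and exploit the fact that the images must satisfy the zero-sum constraint defining $V({\rm \mathcal{CSR}}(m,n))$. The paper compares the adjacent pair $(x,y,-x-y,0^{m-3})$ and $(y,x,-x-y,0^{m-3})$ to get $\psi_1(x)+\psi_2(y)=\psi_1(y)+\psi_2(x)$ directly, whereas you use $\mathbf{0}$ and $a(e_i-e_j)$ together with a third index to decouple; these are equivalent computations, and your use of $m\geq 3$ is legitimate under the standing hypothesis $m>3$.
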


\begin{proof}{
		Let $x,y$ be distinct members of $\mathbb{Z}_n$. Let $V=(x,y,-x-y,0^{m-3})$ 
		and $U=(y,x,-x-y,0^{m-3})$. Hence $V$ and $U$ are adjacent. Thus, by Lemma \ref{3.12},
		\begin{equation} \label{xc}
		\psi_1(x)+\psi_2(y) + \psi_3(-x-y)+ \sum^m_{i=4} \psi_i(0)=\psi_1(y)+\psi_2(x) + \psi_3(-x-y)+ \sum^m_{i=4} \psi_i(0).
		\end{equation}
		So, $\psi_1(x)+\psi_2(y)=\psi_1(y)+\psi_2(x)$. Noting that  $x$ and $y$ are selected arbitrarily, so
		there exists $c_{2}$ such that $\psi_1\equiv \psi_2 + c_2 $. 
		Similarly, there exists $c_{i}$ such that $\psi_1\equiv \psi_i + c_i $, for every $i$.
}\end{proof}

\begin{lem} \label{x2}
	There are $c, d \in \mathbb{Z}_n$ 
	such that $c$ and $n$ are coprime and for any $x$, $\psi_1(x)= cx+d$.
\end{lem}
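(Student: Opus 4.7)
The plan is to convert the vertex-defining relation $\sum_i x_i \equiv 0 \pmod n$ into a functional equation on $\psi_1$ alone, then show this equation forces $\psi_1$ to be affine. I will assume Lemma \ref{3.12} and Lemma \ref{55} throughout, and in particular use the identity $\psi_i = \psi_1 + (d_1 - d_i)$ implied by Lemma \ref{55}.

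First, I would pick a generic vertex of the form $V_{x,y}=(x, y, -x-y, 0, 0, \ldots, 0) \in \mathcal{CSR}(m,n)$, which is a legitimate vertex since $x+y+(-x-y)+0+\cdots+0 \equiv 0 \pmod n$. Since $\psi$ is an automorphism, $\psi(V_{x,y})$ is also a vertex, so by Lemma \ref{3.12} the sum of its coordinates vanishes modulo $n$:
\begin{equation*}
\psi_1(x)+\psi_2(y)+\psi_3(-x-y)+\sum_{i=4}^{m}\psi_i(0)\equiv 0 \pmod n.
\end{equation*}
Using Lemma \ref{55} to substitute $\psi_i = \psi_1 + (d_1 - d_i)$, every occurrence of $\psi_i$ becomes a value of $\psi_1$ plus a constant, so the equation collapses to
\begin{equation*}
\psi_1(x)+\psi_1(y)+\psi_1(-x-y)\equiv C \pmod n,
\end{equation*}
where $C$ is a single constant independent of $x$ and $y$ (it absorbs $(m-3)\psi_1(0)$ together with $\sum_{i=2}^{m}(d_1-d_i)$).

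Next, specializing to $x=y=0$ gives $C=3\psi_1(0)$, so defining $\phi(x):=\psi_1(x)-\psi_1(0)$ yields $\phi(0)=0$ and the cleaner identity
\begin{equation*}
\phi(x)+\phi(y)+\phi(-x-y)\equiv 0 \pmod n.
\end{equation*}
Setting $y=-x$ produces $\phi(-x)=-\phi(x)$, and feeding this back in gives $\phi(x+y)=\phi(x)+\phi(y)$; thus $\phi$ is an additive endomorphism of $\mathbb{Z}_n$. Every such endomorphism is multiplication by a single element $c\in\mathbb{Z}_n$, so $\psi_1(x)=cx+d$ with $d:=\psi_1(0)$.

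Finally, since $\psi_1$ is a bijection of $\mathbb{Z}_n$ (it is a coordinate of an automorphism, cf.\ Lemma \ref{3.12}), multiplication by $c$ must be a bijection on $\mathbb{Z}_n$, which is equivalent to $\gcd(c,n)=1$. The only real step requiring care is the first one, where one must track the constants coming from Lemma \ref{55} through the substitution; apart from this bookkeeping, the argument reduces a vertex-sum constraint to Cauchy's functional equation on the finite cyclic group, whose solutions are all linear. This completes the proposed proof.
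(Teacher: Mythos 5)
Your proof is correct. It rests on the same basic invariance the paper uses --- that $\sum_i\psi_i(x_i)\equiv 0\pmod n$ for every vertex $(x_1,\ldots,x_m)$, which follows from Lemma \ref{3.12} because $\psi$ maps vertices to vertices --- but you exploit it differently. The paper applies this invariance to two specially chosen pairs of adjacent vertices, $(x,-x,0^{m-2})\sim(x-1,-x+1,0^{m-2})$ and $(x,-x+1,-1,0^{m-3})\sim(x+1,-x,-1,0^{m-3})$, and eliminates $\psi_2$ between the two resulting identities to obtain the constant-second-difference relation $\psi_1(x+1)-\psi_1(x)=\psi_1(x)-\psi_1(x-1)$, from which affinity follows by telescoping (this step does not actually require Lemma \ref{55}). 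You instead apply the invariance to the full two-parameter family $(x,y,-x-y,0^{m-3})$, use Lemma \ref{55} to collapse every $\psi_i$ to $\psi_1$ plus a constant, and normalize to reach Cauchy's equation $\phi(x+y)=\phi(x)+\phi(y)$ on $\mathbb{Z}_n$, whose solutions are exactly the multiplication maps $x\mapsto cx$. Both arguments are valid in the stated range $m>3$ (indeed $m\geq 3$ suffices for the vertices used); yours makes the group-theoretic reason for linearity more transparent, while the paper's is more local, needing only two adjacencies rather than the whole family. The closing step --- bijectivity of $\psi_1$ forcing $\gcd(c,n)=1$ --- is the same in both.
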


\begin{proof}{
		Consider 4 vertices $A=(x, -x, 0^{m-2}), B=(x-1, -x+1, 0^{m-2} ), 
		C=(x, -x+1, -1, 0^{m-3}), D=(x+1, -x, -1, 0^{m-3})$.
		The vertices $A$ and $B$ are adjacent, so by (\ref{xc}) and Lemma \ref{55},
		\begin{equation} \label{11111}
		\psi_1(x)+\psi_2(-x)=\psi_1(x-1)+\psi_2(-x+1).
		\end{equation} 
		Also, $C$ and $D$ are adjacent. Thus,
		\begin{equation} \label{222}
		\psi_1(x)+\psi_2(-x+1)=\psi_1(x+1)+\psi_2(-x).
		\end{equation}
		So by (\ref{11111}) and (\ref{222}), for any $x$ 
		we have $$\psi_1(x+1)-\psi_1(x)=\psi_1(x)-\psi_1(x-1),$$ 
		which completes the proof.
}\end{proof}

\begin{lem} \label{8888}
	Every bijective function $\psi$ over the vertices of 
	${\rm \mathcal{CSR}}(m,n)$ is an automorphism 
	if and only if there are a permutation $\sigma$ 
	over $[m]$, $c, d_1, \ldots, d_m \in \mathbb{Z}_n$ 
	such that $c$ and $n$ are two coprime numbers, $\sum d_i =0$ 
	and for every $x=(x_1, \ldots, x_m)$, 
	$\psi(x)= (c x_{\sigma(1)} +d_1, \ldots, c x_{\sigma(m)} +d_m)$.
\end{lem}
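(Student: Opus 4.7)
The statement is an if-and-only-if, so my plan splits into a forward direction (every automorphism has the prescribed form) and a reverse direction (any bijection of that form is an automorphism). The forward direction will essentially be a re-assembly of Lemmas~\ref{3.12}, \ref{55}, and \ref{x2}, together with a short argument extracting the constraint $\sum_i d_i \equiv 0 \pmod n$. The reverse direction will be a direct verification that the prescribed formula defines a bijection on $V(\mathcal{CSR}(m,n))$ that preserves adjacency.

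For the forward direction, let $\psi$ be an automorphism. Lemma~\ref{3.12} provides a permutation $\pi$ of $[m]$ and bijections $\psi_1, \ldots, \psi_m$ of $\mathbb{Z}_n$ with $\psi(\sum_i x_i e_i) = \sum_i \psi_i(x_i)\, e_{\pi(i)}$. Lemma~\ref{55} then forces the $\psi_i$ to agree up to additive constants, and Lemma~\ref{x2} pins down the common affine form $\psi_1(x) = cx + d$ with $\gcd(c,n)=1$. Combining these yields $\psi_i(x) = cx + d_i'$ for suitable $d_i' \in \mathbb{Z}_n$. Reading off the $j$-th coordinate of $\psi(x)$ gives $\psi(x)_j = cx_{\pi^{-1}(j)} + d'_{\pi^{-1}(j)}$; setting $\sigma := \pi^{-1}$ and $d_j := d'_{\pi^{-1}(j)}$ puts $\psi$ in the claimed form. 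To obtain $\sum_j d_j \equiv 0$, I apply $\psi$ to the zero vertex: since $\psi(\mathbf{0}) = (d_1, \ldots, d_m) \in V(\mathcal{CSR}(m,n))$, its coordinate sum must vanish modulo $n$.

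For the reverse direction, suppose $\psi$ is defined by the prescribed formula. I verify three things in turn. First, $\psi$ maps $V(\mathcal{CSR}(m,n))$ into itself: for $x \in V$, $\sum_j (cx_{\sigma(j)} + d_j) = c\sum_j x_j + \sum_j d_j \equiv 0 \pmod n$, using both $x \in V$ and $\sum_j d_j \equiv 0$. Second, $\psi$ is bijective, because each coordinate map $y \mapsto cy + d_j$ is invertible on $\mathbb{Z}_n$ (this is exactly where coprimality of $c$ and $n$ is essential) and $V$ is finite. Third, $\psi$ preserves adjacency: $\psi(x)_j \ne \psi(y)_j$ iff $x_{\sigma(j)} \ne y_{\sigma(j)}$, so the set of coordinates where $\psi(x)$ and $\psi(y)$ differ is the $\sigma^{-1}$-image of the set where $x$ and $y$ differ and in particular has cardinality $2$ iff the original does. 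The only real subtlety in the whole argument is the bookkeeping distinguishing the permutation $\sigma$ in the statement from its inverse $\pi$ appearing in Lemma~\ref{3.12}; beyond that I do not foresee a genuine analytical obstacle, as the preceding lemmas have already carried out the substantive work.
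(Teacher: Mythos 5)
Your proposal is correct and follows essentially the same route as the paper, which simply cites Lemmas \ref{3.12}, \ref{55} and \ref{x2} for the forward direction and asserts the converse; you merely fill in the details the paper leaves implicit (deriving $\sum_i d_i \equiv 0$ from $\psi(\mathbf{0})$ being a vertex, and the explicit adjacency-preservation check for the reverse direction). No gaps.
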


\begin{proof}{
Every bijective function with mentioned  properties  is an automorphism. 
		The other direction of the lemma can be proved by using Lemmas \ref{3.12}, \ref{55} and \ref{x2}. 
}\end{proof}

Now, we are ready to prove the main result.

\begin{alijjj}{
Every automorphism is determined by  a permutation $\sigma$ 
over $[m]$ and $c, d_1, \ldots, d_m \in \mathbb{Z}_n$ 
such that $c$ and $n$ are two coprime numbers and $\sum d_i =0$. 
Note that every such $\sigma, c, d_1, \ldots, d_m$ determine an automorphism.

We should note that $\sigma$ and $c$ and $d_1, ..., d_{m-1}$ are independent from each others. But by Lemma \ref{8888}, $d_m=-\sum^{m-1}_{i=1} d_i$.
The permutation $\sigma$ over $[m]$ is selected from the permutation group $S_m$. 
Also, $c \in \mathbb{Z}_n$ is coprime to $n$. Hence, $c$ is selected from the multiplicative group $\mathbb{Z}^{\times}_{n}$.
Each of $d_1, ..., d_{m-1}$ is selected from $\mathbb{Z}_{n}$.
Thus,  $\mathbf{Aut}({\rm \mathcal{CSR}}(m,n))\cong \mathcal{S}_m \times \mathbb{Z}^{\times}_{n} \times \mathbb{Z}^{m-1}_{n}$. 	
}\end{alijjj}

\section{Conclusion}

In this work, we investigated the properties of simplicial rook graphs and  cyclic simplicial rook graphs. 
We calculated the independence number of ${\rm \mathcal{SR}}(m,n)$ that is the  maximum number of  non attacking rooks which can be placed
on a $(m-1)$-dimensional simplicial chessboard of side length $n+1$. We proved that $\alpha({\rm \mathcal{SR}}(m,n))=\big(1-o(1)\big)\frac{\binom{n+m-1}{n}}{m}$ and  $\gamma({\rm \mathcal{SR}}(m, n))= \Theta (n^{m-2})$. 
Also,  we determined several properties of cyclic simplicial rook graphs such as independence number, chromatic number and automorphism group. Among other results, we also proved that computing the distance between two vertices of a given  ${\rm \mathcal{CSR}}(m,n)$ is NP-hard in terms of $n$ and $m$.

\bibliographystyle{plain}
\bibliography{POref}

\end{document}